\newcommand{\R}{\mathbb{R}} 
\newcommand{\Z}{\mathbb{Z}}  
\DeclareMathOperator{\lk}{lk}
\DeclareMathOperator{\st}{st}
\DeclareMathOperator{\Sing}{split}
\DeclareMathOperator{\id}{id}
\DeclareMathOperator{\GL}{GL}
\DeclareMathOperator{\SL}{SL}
\DeclareMathOperator{\SO}{SO}
\DeclareMathOperator{\Aut}{Aut}
\DeclareMathOperator{\Out}{Out}
\DeclareMathOperator{\Min}{Min}
\DeclareMathOperator{\Isom}{Isom}
\DeclareMathOperator{\Int}{Int}
\newcommand{\G}{\Gamma}             
\newcommand{\AG}{A_\G}                 
\newcommand{\WP}{{\mathcal P}}     
\newcommand{\WQ}{{\mathcal Q}}     
\newcommand{\bPi}{\Pi}  
\newcommand{\SG}{{\Sigma}_\Gamma}                    
\newcommand{\tS}{{\mathcal{T}}_\Gamma}              
\newcommand{\OG}{\mathcal{O}_{\Gamma}}            
\newcommand{\SaG}{\mathbb{S}_{\Gamma}}           
\newcommand{\Sa}{\mathbb{S}}                                 
\newcommand{\SP}{\Sa^\bPi}                                     
\newcommand{\USP}{\widetilde\Sa^\bPi}                     
\newcommand{\F}{\mathcal{F}}                                    
\newcommand{\E}{\mathcal{E}}                                    
\newcommand{\EP}{\mathbb{C}^\bPi}			 
\newcommand{\re}{\mathbbm r} 
\newtheorem{thm}{Theorem}
\newtheorem{lemma}[thm]{Lemma}
\newtheorem{proposition}[thm]{Proposition}
\newtheorem{corollary}[thm]{Corollary}
\numberwithin{thm}{section}
\theoremstyle{remark}
\newtheorem{rmk}{Remark}
\theoremstyle{definition}
\newtheorem{definition}[thm]{Definition}
\definecolor{darkorange}{rgb}{1.0, 0.55, 0.0}
\definecolor{forestgreen}{rgb}{0.13, 0.55, 0.13}
\newcommand{\overl}[1]{\overline{#1}}
\newcommand{\mbb}{\mathbb}
\newcommand{\T}{\mathbb{T}}
\newcommand{\K}{\mathbb{K}}
\newcommand{\Cent}{\mathcal{Z}}
\title{Isometry groups of skewed $\G$-complexes}
\author{Corey Bregman}
\begin{document}
\maketitle

\begin{abstract} Let $\AG$ be a right-angled Artin group. Charney, Vogtmann and the author constructed an outer space for $\Out(\AG)$ generalizing both $CV_n$ for $\Out(F_n)$ and the symmetric space $\SL_n(\R)/\SO_n(\R)$ for $\GL_n(\Z)$. Points in this space are equivalence classes of pairs $(X,\rho)$ where $\rho\colon X\rightarrow \SaG$ is a homotopy equivalence from $X$ to the Salvetti complex $\SaG$ and $X$ is a locally CAT(0) space called a skewed  $\G$-complex. In this note we show that any isometry of a skewed  $\G$-complex which is homotopic to the identity lies in the identity component of $\Isom(X)$.  As a corollary, we prove that the group of path components of $\Isom(X)$ is finite and  injects into $\Out(\AG)$.
\end{abstract}

\section{Introduction}

Let $\G$ be a finite simplicial graph, and let $\AG$ be the associated right-angled Artin group.  Right-angled Artin groups (RAAGs) generalize both free groups $F_n$ and free abelian groups $\Z^n$, and sit inside the larger family of Artin groups. Their relative importance in geometric group theory in recent years comes from their close connection with CAT(0) cube complexes, which has been exploited to solve several problems in low-dimensional topology \cite{Sageev, BeWi, HaWi, AGM}.

Automorphism groups of RAAGs have been the subject of many investigations, though primarily by algebraic means \cite{ChVo1, ChVo2, Day, DayWade, DaySaleWade}. In contrast, one of the most powerful tools in the study of automorphisms of the free group is the action of $\Out(F_n)$ on Culler--Vogtmann outer space $CV_n$ \cite{CuVo}.  In an effort to bring similar geometric techniques  to bear on automorphisms of general RAAGs, Charney, Vogtmann, and the author constructed a finite-dimensional, contractible outer space $\OG$ on which $\Out(\AG)$ acts with finite stabilizers \cite{BCV}. The construction of $\OG$, equipped with the action of $\Out(\AG)$, generalizes that of  $CV_n$ for $\Out(F_n)$ and the classical symmetric space $\SL_n(\R)/\SO_n(\R)$ for $\GL_n(\Z)$.

Points in $\OG$ are equivalence classes of pairs $(X,\rho)$ where $X$ is a locally CAT(0) metric space called a skewed $\G$-complex and the \emph{marking} $\rho\colon X\rightarrow \SaG$ is a homotopy equivalence to the Salvetti complex $\SaG$.  Two pairs $(X,\rho)$ and $(X',\rho')$ are equivalent if there exists an isometry $i\colon X\rightarrow X'$ that commutes with the markings up to homotopy, \emph{i.e.}, such that $\rho\simeq \rho'\circ i$. As a consequence, understanding the isometry group of the skewed $\G$-complex $X$ is a crucial ingredient in the construction of $\OG$ and the action of $\Out(\AG)$ on it.  The structure of this isometry group is the subject of this short note.  

$\G$-complexes, first introduced in \cite{CSV}, are a family of nonpositively curved cube complexes with fundamental group $\AG$.  Every $\G$-complex $X$ comes with multiple collapse maps to the Salvetti complex  $\SaG$.  Each collapse map is a homotopy equivalence $c\colon X\rightarrow \SaG$ obtained by identifying points in carriers of hyperplanes of $X$ (one thinks of the collapsed subcomplex as analogous to a maximal tree of a graph). A collapse map identifies $\pi_1(X)$ with $\pi_1(\SaG)=\AG$, and distinct collapse maps result in identifications that differ by  automorphisms of $\AG$.

Importantly, not all automorphisms of $\AG$ arise in this way, but only the so-called \emph{untwisted automorphisms} $U(\AG)$.  It was this fact that allowed Charney, Stambaugh and Vogtmann to define an outer space $\SG$ for $U(\AG)$ in \cite{CSV}. Untwisted automorphisms behave more like automorphisms of $F_n$ than automorphisms of $\Z^n$. For this reason, $\SG$ is similar in construction to $CV_n$.  In particular, they prove that $\SG$ also admits an equivariant retraction to a cocompact spine $K_\G$. Points in $K_\G$  correspond to pairs $(X,h)$ where $X$ is a $\G$-complex and $h$ is an untwisted marking, \emph{i.e.}, a homotopy equivalence $h\colon X\rightarrow \SaG$ such that $h\circ c^{-1}$ lies in $U(\AG)$, where $c^{-1}$ is a homotopy inverse of some (and hence any) collapse map $c$.

The construction of $\OG$ relies in a fundamental way on Charney--Stambaugh--Vogtmann's construction. To pass from $K_\G$ to $\SG$, one uses rectilinear cubes with variable side-lengths in the construction of the $\G$-complex $X$, rather than just unit cubes $[0,1]^n$.  To pass from $\SG$ to $\OG$, rectilinear cubes are then replaced with euclidean parallelotopes, subject to certain restrictions.  A family of parallelotopes that satisfy these restrictions is called an \emph{allowable} in \cite{BCV}, and the resulting metric is called an \emph{allowable metric}.  A \emph{skewed $\G$-complex} is a $\G$-complex $X$ with an allowable metric $d$.

Let $\Isom(X)$ be the isometry group of a skewed $\G$-complex $(X,d)$.  Equipped with the metric topology, $\Isom(X)$ is a locally compact topological group.  The connected component of the identity, denoted $\Isom_0(X)$, consists of isometries that are isotopic, \emph{through isometries}, to the identity. A fortiori, elements of $\Isom_0(X)$ are therefore homotopic to the identity.  Conversely, we prove

\begin{thm}\label{thm:main} Let $X$ be a skewed $\G$-complex and let $f\colon X\rightarrow X$ be an isometry. If is homotopic to the identity, then $f\in \Isom_0(X)$.  
\end{thm}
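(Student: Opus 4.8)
The plan is to analyze the structure of an isometry $f$ homotopic to the identity by examining its effect on the combinatorial data of the skewed $\Gamma$-complex $X$, and then to construct an explicit isotopy through isometries from $f$ to $\id_X$. The first step is to understand what an isometry can do. Since $X$ is locally CAT(0) with an allowable metric built from euclidean parallelotopes glued along a cube-complex combinatorial structure, any isometry $f$ must permute the cells of $X$ in a way compatible with the metric: it sends parallelotopes to parallelotopes isometrically, hence permutes the hyperplanes of $X$, and induces an automorphism of the underlying $\Gamma$-complex structure together with a compatible permutation of the side-length/skewing parameters. In particular, $f$ induces a permutation $\sigma_f$ of the vertex set and, lifting to the universal cover $\widetilde X$, an automorphism $f_*$ of $\AG = \pi_1(\SaG)$ via any collapse map; because $f \simeq \id$, this induced outer automorphism is trivial, so $f_*$ is an inner automorphism.

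The next step is to reduce to the case where $f$ fixes the combinatorial structure pointwise. Because $f_*$ is inner and $X$ (being locally CAT(0)) is aspherical with $\pi_1(X)$ having trivial center when $\Gamma$ has no central vertices — and more generally by looking at the action on the fixed-point set / on $\widetilde X$ — one argues that $f$ must actually preserve each hyperplane (not just permute the collection) and fix the $0$-skeleton: a nontrivial permutation of vertices or hyperplanes would force a nontrivial outer automorphism of $\AG$ via the standard correspondence between collapse maps, combinatorial automorphisms, and (untwisted) automorphisms of $\AG$ recalled in the introduction. So after this reduction $f$ restricts to an isometry of each closed parallelotope that fixes its vertices; since a euclidean parallelotope is the convex hull of its vertices and an isometry fixing all vertices of such a cell is the identity on that cell, we conclude $f = \id_X$ in the case $X$ has no symmetries of individual cells to exploit — but in general $f$ need only preserve each parallelotope \emph{setwise} while fixing vertices, which already forces $f$ to be the identity on each parallelotope because the vertices of a parallelotope affinely span it. The upshot is that any isometry homotopic to $\id$ that fixes the combinatorial structure is already $\id_X$; the content is therefore entirely in showing the combinatorial action is trivial, which is where the hypothesis $f \simeq \id$ enters.

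There is, however, a subtlety: isometries need \emph{not} be cellular, because an allowable metric can have continuous families of hyperplanes or "sheared" directions along which $X$ admits a positive-dimensional torus of isometries (this is exactly the phenomenon that makes $\Isom_0(X)$ nontrivial, generalizing the maximal torus in $\SO_n$). So the honest statement is: write $\Isom(X) \to \Aut(\text{comb. structure})$ and note its kernel is precisely a closed subgroup that is connected — it is a product of tori coming from shearing flows along "rigid" directions and is contained in $\Isom_0(X)$ by construction. Thus the real argument runs: (1) the map $\Isom(X) \to \Out(\AG)$, $f \mapsto [f_*]$, is well-defined and factors through $\pi_0(\Isom(X))$; (2) its kernel on $\pi_0$ is trivial, i.e. if $f_*$ is inner then $f$ lies in the identity component. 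For (2), compose $f$ with a deck-type isometry to assume $f_*$ is trivial, not merely inner (here one uses that $\AG$ acts by isometries on $\widetilde X$ and that inner automorphisms are realized geometrically); then $f$ descends to an isometry of $X$ inducing $\id$ on $\pi_1$, which by the previous paragraph acts trivially on the combinatorial structure, hence lies in the kernel subgroup, which is connected and in $\Isom_0(X)$.

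The main obstacle I anticipate is step (2) — specifically, carefully ruling out that a homotopically trivial isometry permutes hyperplanes or vertices nontrivially. The danger is a "hidden symmetry": a combinatorial automorphism of $X$ that is metrically realized, nontrivial on cells, yet homotopic to the identity. One must show such a thing cannot occur, and the key input is the relationship between collapse maps and automorphisms of $\AG$ recalled in the introduction: distinct collapse maps give identifications differing by (untwisted) automorphisms, so a combinatorial automorphism of $X$ homotopic to $\id$ must act trivially on the set of hyperplanes up to the equivalence that records the collapse-map data — and then an independent rigidity argument (using that $f$ fixes a point, or fixes the minset of some element, or simply that $\AG$ has finitely many conjugacy classes of standard generators detectable by the hyperplanes) pins down that the permutation is trivial. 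Once the combinatorial action is shown trivial, the corollary — that $\pi_0(\Isom(X))$ is finite and injects into $\Out(\AG)$ — follows because $\Aut$ of a finite cube complex's combinatorial structure is finite and maps to $\Out(\AG)$ with the kernel (on $\pi_0$) now known to be trivial.
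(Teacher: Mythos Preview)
Your proposal has a genuine structural gap that the paper's proof avoids entirely. The map $\Isom(X)\to\Aut(\text{combinatorial structure})$ you want to use does not exist: as you yourself note, isometries of a skewed $\G$-complex are not cellular in general, since the central torus acts by translations that move vertices continuously. You try to repair this by declaring that the ``kernel'' of this (undefined) map is a connected torus contained in $\Isom_0(X)$, but that assertion is essentially the theorem itself. Proposition~\ref{prop:FiniteComps} already identifies $\Isom_0(X)$ with the central-torus translations; what remains is exactly to show that every isometry homotopic to the identity \emph{is} such a translation, and your argument assumes this rather than proving it. Your step (2), which you correctly flag as the main obstacle, is never actually carried out: the claim that a homotopically trivial combinatorial automorphism must permute hyperplanes trivially is not a consequence of the collapse-map formalism you invoke, since distinct collapse maps can differ by nontrivial untwisted automorphisms while the underlying combinatorial isomorphism is homotopically nontrivial---you would need an independent rigidity statement that you do not supply.

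The paper's proof takes a completely different, purely CAT(0)-geometric route and never touches the combinatorial structure. For each maximal clique $\Delta\subseteq\G$ it defines a \emph{maximal torus} $\T_\Delta\subset X$ as the image of $\Min(A_\Delta)$; these are flat tori that together cover $X$. An isometry $f\simeq\id$ lifts equivariantly, hence preserves each $\Min(A_\Delta)$, so $f$ restricts to a translation $\tau_\Delta$ on each $\T_\Delta$. The key lemma is that when $\T_{\Delta_1}\cap\T_{\Delta_2}$ is nonempty it has the homotopy type of $\T_{\Delta_1\cap\Delta_2}$; since a translation preserving a closed connected subset must have its one-parameter subgroup carried by $\pi_1$ of that subset, both $\tau_{\Delta_1}$ and $\tau_{\Delta_2}$ are forced to lie in $\T_{\Delta_1\cap\Delta_2}$. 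Chaining through overlapping maximal tori and using that $\bigcap_\Delta\Delta=\Cent(\G)$, one gets a single $\tau\in\T_{\Cent(\G)}$ with $f|_{\T_\Delta}=\tau$ for all $\Delta$; since the maximal tori cover $X$, $f=\tau\in\Isom_0(X)$. This argument never asks whether $f$ is cellular and never needs a map to a combinatorial automorphism group.
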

Let $\pi_0(\Isom(X))=\Isom(X)/\Isom_0(X)$ denote the group of path components of $\Isom(X)$. It follows from general theory of CAT(0) spaces (see Proposition \ref{prop:FiniteComps} below) that $\pi_0(\Isom(X))$ is finite.  As a corollary of Theorem \ref{thm:main} we  obtain
\begin{corollary}\label{cor:main} $\pi_0(\Isom(X))$ is finite and the natural map $\Isom(X)\rightarrow \Out(\AG)$ induces an injection $\pi_0(\Isom(X))\rightarrow \Out(\AG)$.
\end{corollary}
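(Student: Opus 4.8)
The plan is to make the natural map $\Isom(X)\to\Out(\AG)$ explicit, identify its kernel with the subgroup of isometries freely homotopic to the identity, and then read off the corollary from Theorem~\ref{thm:main}; finiteness will come from Proposition~\ref{prop:FiniteComps}.

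First I would fix a collapse map $c\colon X\to\SaG$ together with a homotopy inverse $\bar c$. Since $X$ and $\SaG$ are locally CAT(0), their universal covers are contractible, so both are aspherical $K(\AG,1)$'s. An isometry $f\colon X\to X$ is in particular a homotopy equivalence, so $c\circ f\circ\bar c$ is a self-homotopy-equivalence of $\SaG$ and determines an element $\Phi(f)\in\Out(\AG)$ (the ambiguity of choosing a basepoint and a path to it changes the induced automorphism of $\pi_1$ only by an inner automorphism). I would then check the two routine points: that $\Phi$ is a homomorphism, and that it is independent of the chosen collapse map --- replacing $c$ by another collapse map $c'$ conjugates $c\circ f\circ\bar c$ by the fixed automorphism of $\AG$ relating the two identifications $\pi_1(X)\cong\AG$, and conjugation is invisible in $\Out(\AG)$.

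The key step is the computation of $\ker\Phi$. For the aspherical complex $\SaG$, free homotopy classes of self-maps correspond to homomorphisms $\AG\to\AG$ up to conjugation, so a self-homotopy-equivalence of $\SaG$ is freely homotopic to $\id_{\SaG}$ exactly when it induces an inner automorphism of $\AG$. Hence $\Phi(f)=1$ iff $c\circ f\circ\bar c\simeq\id_{\SaG}$, which, after composing with $c$ and $\bar c$, is equivalent to $f\simeq\id_X$. Thus $\ker\Phi$ is precisely the set of isometries of $X$ homotopic to the identity. Every element of $\Isom_0(X)$ is isotopic through isometries --- hence homotopic --- to $\id_X$, so $\Isom_0(X)\subseteq\ker\Phi$, while Theorem~\ref{thm:main} supplies the reverse inclusion. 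Therefore $\ker\Phi=\Isom_0(X)$, and $\Phi$ descends to an injective homomorphism $\pi_0(\Isom(X))=\Isom(X)/\Isom_0(X)\hookrightarrow\Out(\AG)$. Finiteness of $\pi_0(\Isom(X))$ is Proposition~\ref{prop:FiniteComps}.

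Since the corollary is essentially a repackaging of Theorem~\ref{thm:main}, I do not expect a genuine obstacle here: the only substantive input beyond that theorem is the standard classification of homotopy classes of self-maps of an aspherical space, and the only thing demanding a little care is verifying that $\Phi$ is well defined independently of the collapse map, equivalently independently of the identification $\pi_1(X)\cong\AG$.
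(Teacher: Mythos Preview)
Your argument is correct and matches the paper's own deduction: the paper defines $\psi\colon\Isom(X)\to\Out(\AG)$ via the $K(\AG,1)$ identification, observes that $\ker\psi$ consists exactly of isometries homotopic to the identity, applies Theorem~\ref{thm:main} (packaged there as Theorem~\ref{thm:IsoID}) to get $\ker\psi=\Isom_0(X)$, and cites Proposition~\ref{prop:FiniteComps} for finiteness. One small correction: replacing the collapse map conjugates $\Phi$ by an element of $\Out(\AG)$, which is not literally ``invisible'' there---but conjugation preserves the kernel, so your conclusion is unaffected.
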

\vspace{1em}

\subsection{Relation to \cite{BCV}} In Section 7 of \cite{BCV} it is shown that $\OG$ is contractible.  The strategy is to construct a contractible space $\tS$ together with a fibration $\Theta\colon \tS\rightarrow \OG$ whose fibers are contractible.  Each element of the fiber is isometric to a fixed marked, skewed $\G$-complex $(X,\rho)$. We choose a basepoint in the fiber and describe each other point in terms of certain ``shearing coordinates" relative to the basepoint.   These shearing coordinates describe a real vector space, hence the fibers are contractible. 

Theorem \ref{thm:main} allows us to establish that the shearing coordinates are well-defined. In order to define the shearing coordinates we first pick an isometry from the point in the fiber to $(X,\rho)$.  Although this isometry need not be unique, the fact that points in $\OG$ are marked means that it is unique \emph{up to homotopy}.  Therefore, by Theorem \ref{thm:main}, any two choices of isometry differ by an element of $\Isom_0(X)$. One then proves that $\Isom_0(X)$ leaves the shearing coordinates unchanged.

\subsection{Outline of the proof.} In \S\ref{sec:Gcomplexes}, we review the relevant properties $\G$-complexes and skewed metrics that we will need. We prove that every skewed $\G$-complex $X$ has the geodesic extension property, and use this to identify $\Isom_0(X)$ with the group of isometries generated by translating along the central torus of $X$. 

 In \S\ref{sec:MaxTori}, we study locally convex subspaces of  $X$ called  \emph{maximal tori} that are isometric to tori with flat metrics. Maximal tori can be characterized in terms of maximal abelian special subgroups of $\AG$.  We show that every point of $X$ lies in at least one maximal torus, and that any two maximal tori intersect in a locally convex subspace which is homotopy equivalent to a torus representing the intersection of the corresponding maximal abelian  special subgroups.  Moreover, any two such tori can be connected by a chain of maximal tori where consecutive tori have non-empty intersection.  
 
 Finally in \S\ref{sec:ProofofMain} we prove Theorem \ref{thm:main}. We sketch the proof here.  Any isometry which is homotopic to the identity must restrict to a translation on each maximal torus.  The fact that the isometry must preserve intersections of maximal tori further restricts these translations to lie in the torus corresponding to the intersection of the abelian subgroups. Since the intersection of all maximal abelian special subgroups is just the center of $\AG$, the translation must be in the central torus.  Because the maximal tori cover $X$, we conclude that the isometry must be a translation of the central torus of $X$.

 \vspace{1em}
\textbf{Acknowledgements.} I am grateful to my collaborators, Ruth Charney and Karen Vogtmann, for their help in writing this paper.  The topic of this paper was what sparked our initial collaboration 5 years ago, which later turned into \cite{BCV}. It is fitting then, that this work should fill in a small but necessary step in that project.  I would also like to thank Ruth specifically for her constant encouragement and mentorship over the years.

\section{Background on skewed $\G$-complexes}\label{sec:Gcomplexes}
This section will be a review of facts concerning automorphisms of RAAGs, blowups and $\G$-complexes, and skewed metrics on $\G$-complexes.  We will assume familiarity with CAT(0) spaces and cube complexes.  Full details for the material quoted in this section can be found in \cite{CSV} and in Sections 2 through 5 of \cite{BCV}.

\subsection{RAAGs and their automorphisms}
Let $\Gamma=(V,E)$ be a finite simple graph.  The right-angled Artin group (RAAG) associated to  $\G$ is the group $\AG$ with presentation \begin{equation}\label{eq:Presentation}\AG=\langle v\in V\mid [v,w] \text{ if } \{v,w\}\in E\rangle\end{equation}
Any induced subgraph $\Delta\subseteq \Gamma$ generates a subgroup $A_\Delta$ called a \emph{special subgroup}. A \emph{clique} $\Delta\subseteq \Gamma$ is a complete subgraph of $\Gamma$.  $\Delta$ is \emph{maximal} if it is not contained in a larger clique. Maximal cliques in $\Gamma$ correspond to maximal abelian special subgroups of $A_\Gamma$. We denote by $A_\Delta$ the special subgroup generated by the vertices in $\Delta$. Let $\Cent(\Gamma)$ be the intersection of all maximal cliques in $\Gamma$.  The special subgroup $A_{\Cent(\Gamma)}$ is just the center $\Cent(A_\Gamma)$.

For any $v\in V$, the link $\lk(v)$ is the induced subgraph on vertices adjacent to $v$. The link consists of those generators which are distinct from $v$ but which commute with $v$.   The star $\st(v)=\{v\}\cup \lk(v)$, and $A_{\st(v)}$ is the centralizer of $v$ in $\AG$. We define two partial orderings on $V$ as follows. The \emph{fold ordering} is defined as $v\leq_w$ if and only if $\lk(v)\subseteq \lk(w)$.    The twist ordering is defined as $v\leq_t w$ if and only if $\st(v)\subseteq \st(w)$.  If $\lk(v)=\lk(w)$ (resp. $\st(v)=\st(w)$) we call $v$ and $w$ \emph{fold-equivalent} (resp. \emph{twist-equivalent}). 

A natural set of generators for $\Aut(\AG)$ and hence $\Out(\AG)$ can be defined in terms of $\G$ and the twist and fold relations:
\begin{definition}The \emph{Laurence--Servatius generators} for $\Aut(\AG)$ consist of the following four types of automorphisms
\begin{enumerate}
\item \textbf{Graph Automorphisms:} elements of $\Aut(\G)$ induce automorphisms of $\AG$.
\item \textbf{Inversions:} For each $v\in V$, $i_v\colon v\mapsto v^{-1}$ and acts as the identity elsewhere. 
\item \textbf{Partial Conjugations:} For any $v\in V$, let $C$ be a connected component of $\Gamma\setminus \st(v)$.  The partial conjugation $\chi_{v,C}\colon w\mapsto vwv^{-1}$ if $w\in C$, and is the identity otherwise.
\item  \textbf{Transvections:} If $\lk(w)\subseteq \st(v)$, then $\phi_{v,w}\colon w\rightarrow vw$ and acts as the identity otherwise.  If $w\leq_t v$, then $\phi_{v,w}$ is a \emph{twist}.  If $w\leq_f v$, then $\phi_{v,w}$ is a \emph{fold}.  
\end{enumerate}
\end{definition}
Clearly conjugating $\AG$ by $v$ is the same as partially conjugating each component of $\G\setminus \st(v)$, hence the product of each of these partial conjugations is trivial in $\Out(\AG)$. The subgroup generated by graph automorphisms, inversions, partial conjugations, and folds is called the \emph{untwisted subgroup} $U(\AG)$. In \cite{CSV}, Charney--Stambaugh--Vogtmann construct a finite-dimensional contractible space $K_\G$ on which $U(\AG)$ acts properly and cocompactly. 

The goal of \cite{BCV} was therefore to build a space to incorporate the action of twists. We will say that $v\in V$ is \emph{twist-dominant} if there exists $w\in V$ such that $w\leq_tv$.  Equivalently, $v$ is twist-dominant if there exists a twist $\phi_{v,w}\colon w\rightarrow vw$. On the other hand, the next proposition, which may be found in Section 2 of \cite{BCV}, follows from the definitions 
\begin{proposition}\label{prop:TDNF}
If $v$ is twist-dominant, there does not exist $u\neq v$ such that $v\leq_f u$.  
\end{proposition}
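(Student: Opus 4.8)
The plan is to argue directly from the definitions, chasing adjacencies around the two chains of inclusions that the hypotheses provide. Unwinding the terminology, $v$ being twist-dominant means there is a vertex $w\neq v$ with $\st(w)\subseteq\st(v)$, and the conclusion to be proved is that no vertex $u\neq v$ satisfies $v\leq_f u$, i.e. $\lk(v)\subseteq\lk(u)$. So I would suppose, for contradiction, that such a $u$ exists.

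First I would observe that $w\in\st(w)\subseteq\st(v)$ together with $w\neq v$ forces $w\in\lk(v)$, so $w$ and $v$ are adjacent. Feeding this into the fold hypothesis, $w\in\lk(v)\subseteq\lk(u)$, so $w$ and $u$ are adjacent, i.e. $u\in\lk(w)$. Now I would push $u$ back through the twist hypothesis: $u\in\lk(w)\subseteq\st(w)\subseteq\st(v)$, and since $u\neq v$ this gives $u\in\lk(v)$. Applying the fold hypothesis once more yields $u\in\lk(v)\subseteq\lk(u)$, i.e. $u$ is adjacent to itself --- impossible since $\G$ is a simple graph. That contradiction completes the proof.

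I do not expect a genuine obstacle here: the argument is pure bookkeeping with $\lk$, $\st$, $\leq_f$, and $\leq_t$. The only things to be careful about are the convention that the vertex $w$ witnessing twist-dominance is taken distinct from $v$ (otherwise the statement would be vacuous, since a transvection $\phi_{v,w}$ requires $w\neq v$ to be an automorphism) and the repeated appeal to the absence of loops in $\G$, which is exactly what makes the final ``$u\in\lk(u)$'' a contradiction.
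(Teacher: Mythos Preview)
Your proof is correct. The paper does not actually give a proof of this proposition; it merely states that it ``follows from the definitions'' and cites \cite{BCV}, which is precisely what your direct adjacency-chasing argument accomplishes.
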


In particular, if $v$ is twist-dominant then it is not fold-equivalent to any other generator. This proposition is a simple observation, but plays a crucial role in the definition of skewed metrics for $\G$-complexes and the construction of $\OG$ more generally.

\subsection{Blowups and $\G$-complexes}
Throughout, a $n$-cube will be the space isometric to $[0,1]^n$.  A cube complex is a metric space obtained by gluing cubes along their faces by isometries, where the metric is the induced path metric.  A cube complex is called \emph{nonpositively curved} (NPC) if it satisfies Gromov's link condition at vertices: the link of each vertex is a flag simplicial complex.  If $X$ is an NPC cube complex, the universal cover of $X$ is CAT(0), and in particular, $X$ is aspherical.

The Salvetti complex $\SaG$ is a nonpositively curved (NPC) cube complex $\SaG$ whose fundamental group is isomorphic to $\AG$.  The 2-skeleton of $\SaG$ is just the presentation complex for $\AG$ with the presentation in Equation \ref{eq:Presentation}. That is, we have a wedge of $|V|$ circles,  and each relation adds a 2-cube along $vwv^{-1}w^{-1}$ whenever $\{v,w\}\in E$. For each $n$-clique in $\G$, we then add a $n$-cube in the $n$-skeleton of $\SaG$.  By construction, $\SaG$ satisfies Gromov's link conditon and thus is nonpositively curved.  

In \cite{BCV}, Charney, Stambaugh and Vogtmann build a family of NPC cube complexes called blowups. Each blowup has fundamental group $\AG$ and is associated to a collection of automorphisms called $\Gamma$-Whitehead automorphisms, which we now describe.

\begin{definition} Let $V^\pm=V\cup V^{-1}$ be the union of all generators and their inverses.  Given $m\in V$,  a \emph{$\G$-Whitehead partition $\WP$ based at $m$} is a partition of $V^\pm$ into three sets $P^+$, $P^-$ (called the sides of $\WP$) and $\lk(\WP)$.  
\begin{enumerate}[(i)]
\item $\lk(\WP)$ consists of $\lk(m)\cup \lk(m)^{-1}$
\item $m$ and $m^{-1}$ lie in different sides of $\WP$, but neither $P^+$ nor $P^-$ is a singleton
\item If $v\neq w$ are in the same component of $\Gamma\setminus \st(m)$, then $v^\pm,w^\pm$ are in the same side of $\WP$.
\end{enumerate}

\end{definition}
If $v$ and $v^{-1}$ lie in different sides of $\WP$, we say $\WP$ splits $v$. By (ii), $m\in \Sing(\WP)$ and by (iii), any other $v\in \Sing(\WP)$ satisfies $v\leq_f m$.  Thus, each side of $\WP$ consists of a union of components (with inverses) of $\Gamma\setminus \st(m)$, or singletons $v$ or $v^{-1}$ such that $v\leq_f m$.   We denote by  $\Sing(\WP)$ be the set of $v$ that are split by $\WP$, and set $\max(\WP)\subset \Sing(\WP)$ to be those generators fold-equivalent to $m$. The partition $\WP$ defines a \emph{$\G$-Whitehead automorphism} $\phi(\WP,m)$ that inverts $m$, acts as a fold on elements of $\Sing(\WP)\setminus \{m\}$, a partial conjugation by $m$ on $P^-$ and the identity on $P^+\cup \lk(\WP)$ (see \cite{BCV} for details.)

One important consequence of Proposition \ref{prop:TDNF} is 
\begin{lemma}\label{lem:TDMax}If $v$ is twist-dominant and $v\in \Sing(\WP)$, then $\max(\WP)=\{v^\pm\}$. 
\end{lemma}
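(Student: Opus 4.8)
The plan is to deduce the statement almost immediately from Proposition \ref{prop:TDNF}. The first step is to show that the hypothesis $v\in\Sing(\WP)$ forces $v$ to be the base vertex $m$ of $\WP$. Recall that, as recorded just after the definition of a $\G$-Whitehead partition, every element of $\Sing(\WP)$ is either $m$ itself or satisfies $v\leq_f m$. If $v$ is twist-dominant, Proposition \ref{prop:TDNF} rules out the existence of any $u\neq v$ with $v\leq_f u$; applying this with $u=m$ shows that the second alternative is impossible, so $v=m$.

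The second step is to rule out any other generator in $\max(\WP)$. By definition $\max(\WP)$ consists of the generators fold-equivalent to $m$, that is, those $w$ with $\lk(w)=\lk(m)$. Since $m=v$ by the first step, $w\in\max(\WP)$ means $\lk(w)=\lk(v)$; in particular $\lk(v)\subseteq\lk(w)$, so $v\leq_f w$. If $w\neq v$, this again contradicts Proposition \ref{prop:TDNF}, so $v$ is the only generator fold-equivalent to $m$. Since $v$ itself (and hence $v^{-1}$) is trivially fold-equivalent to $m=v$, we conclude $\max(\WP)=\{v^\pm\}$.

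This is a short deduction, and I do not expect a genuine obstacle: the only care needed is in correctly invoking the classification of elements of $\Sing(\WP)$ recorded after the definition of $\G$-Whitehead partitions (namely, that a split generator other than the base vertex is fold-dominated by it), and in matching the $\pm$-bookkeeping between the notations $\Sing(\WP)$, $\max(\WP)$, and $\{v^\pm\}$. Neither point is substantive once Proposition \ref{prop:TDNF} is in hand.
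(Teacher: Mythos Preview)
Your proof is correct and matches the paper's intended approach: the paper presents Lemma~\ref{lem:TDMax} without proof, simply as ``one important consequence of Proposition~\ref{prop:TDNF},'' and your two-step argument is exactly the routine deduction the paper leaves implicit. The only quibble is cosmetic: in the paper $\max(\WP)$ is defined as a subset of $\Sing(\WP)$ (generators, not generators-with-inverses), so the conclusion $\max(\WP)=\{v^\pm\}$ is a slight notational inconsistency already present in the paper rather than something your proof needs to resolve.
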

\noindent
In other words, the only way a partition $\WP$ can split a twist-dominant generator $v$ is if $\WP$ is based at $v$.

Two $\G$-Whitehead partitions $\WP, \WQ$ are called \emph{adjacent} if every element of $\max(\WP)$ commutes with every element of $\max(\WQ)$.  This is well-defined since every element of $\max(\WP)$ is fold-equivalent.  We also say $v\in V^\pm$ and $\WP$ are adjacent  if $v\in \lk(\WP)$.  Two partitions $\WP, \WQ$ are called \emph{compatible} if  they are adjacent  or if $P^\times\cap Q^\times=\emptyset$ for some choice of sides $P^\times, Q^\times$.In fact, if $\WP$ and $\WQ$ are compatible but not adjacent, then  $P^\times\cap Q^\times=\emptyset$ for exactly one choice of sides $P^\times, Q^\times$. It is this observation that allows for the construction of a blowup for each collection of pairwise compatible partitions.   

Let $\bPi=\{\WP_1,\ldots, \WP_k\}$ be a collection of pairwise compatible partitions.  A \emph{region} is a choice of sides $P_i^\times$ such that whenever $\WP_i, \WP_j$ are not adjacent, $P_i^\times\cap P^\times_j$ is nonempty.  For any $v\in V^\pm$, either $v\in \lk(\WP_i)$ or is contained in exactly one side of $\WP_i$.  A region $\re=\{P_1^\times,\ldots, P_k^\times\}$ is called a \emph{terminal region for $v$} if contains of all sides containing $v$. If $\re$ is a terminal region for $v$, by switching sides along all the partitions containing $v$, we obtain a terminal region $\re^{*v}$ for $v^{-1}$ We now describe the blowup $\SP$ associated to $\bPi$.  

\begin{definition} The \emph{blowup} associated to a compatible collection of partitions $\bPi=\{\WP_1,\ldots, \WP_k\}$ is the cube complex $\SP$ constructed as follows.
\begin{enumerate}[(i)]
\item The vertices of $\SP$ are in one-to-one correspondence with regions $\re$
\item Each edge is labeled by either $v\in V$ or by a partition $\WP_i\in \bPi$. We add an edge labeled $\WP_i$ between two regions if they differ only by changing sides of $\WP_i$. If $\re$ is a terminal region for $v$, then we add an edge labeled $v$ from $\re$ to $\re^{*v}$.
\item An $n$-dimensional cube is attached in whenever there exists a collection of pairwise adjacent edge labels $\{A_1,\ldots, A_n\}$ at a vertex.
\end{enumerate}
\end{definition}

It follows from the definition that each edge labeled by $A\in V\cup \bPi$ is dual to a unique hyperplane $H_A$.  We can extend the twist and fold orderings on $V$ to $V\cup \bPi$ in terms of maximal elements. Recall that all elements in $\max(\WP)$ are fold equivalent and therefore have the same link. Hence, the fold and twist relation depend only on the fold equivalence class of a generator. For any $A, B\in V\cup \bPi$, define $A\leq_f B$ (resp. $A\leq_t B$) to mean $\max(A)\leq \max(B)$ (resp. $\max(A)\leq_t \max(B)$ where by convention we set $\max(v)$ to be $v$ if $v\in V$.

Edges labeled by $\WP$ join distinct regions and thus are always embedded, hence the hyperplane carrier $\kappa(H_\WP)$ is embedded as well.  We can therefore collapse along $H_\WP$ to get a homotopy equivalent cube complex with one fewer hyperplane.  Theorem 4.6 of \cite{CSV} states that the cube complex that results from collapsing $H_\WP$ in $\SP$ is isomorphic to $\Sa^{\bPi-\WP}$. In particular, collapsing the hyperplanes labeled by each $\WP_i\in \bPi$ we obtain a homotopy equivalence called the \emph{standard collapse map} $c_\pi\colon \SP\rightarrow \Sa^\emptyset=\SaG$.

The following facts about blowups will be needed in the remainder of this paper.  Their proofs can be found in either \cite{CSV} or Section 3 of \cite{BCV}.

\begin{proposition}\label{prop:BlowupFacts} Let $\SP$ be a blowup. 
\begin{enumerate}[(i)]
\item $\SP$ is NPC and the standard collapse map $c_\pi\colon \SP\rightarrow \SaG$ is a homotopy equivalence. 
\item The subcomplex $\EP\subset \SP$ consisting of all cubes with edge labels in $\bPi$ is CAT(0) and locally convex. 
\item For each maximal collection of pairwise adjacent edge labels in $V\cup \bPi$ there is a unique maximal cube in $\SP$.
\end{enumerate}
\end{proposition}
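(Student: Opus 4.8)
The plan is to treat the three items in order, in each case reducing the problem to the combinatorics of the regions of $\bPi$ and the edge labels incident to them. For (i) I would argue the two halves separately. For the homotopy equivalence I induct on $k=|\bPi|$: an edge of $\SP$ labeled by a partition joins two \emph{distinct} regions, so the carrier of its dual hyperplane $H_{\WP_k}$ is embedded and collapsing it is a homotopy equivalence; by Theorem~4.6 of \cite{CSV} the result is $\Sa^{\bPi-\WP_k}$, and since $\Sa^\emptyset=\SaG$ this exhibits $c_\pi\colon\SP\to\SaG$ as a composite of homotopy equivalences. For nonpositive curvature I check Gromov's link condition at each region $\re$: by the cube-attaching rule (iii) in the definition of $\SP$, the simplices of $\lk(\re)$ are exactly the collections of pairwise adjacent edge labels at $\re$, so any finite set of vertices of $\lk(\re)$ that is pairwise spanned by edges spans a simplex, i.e. $\lk(\re)$ is flag.

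For (ii), $\EP$ is the subcomplex consisting of the cubes all of whose edge labels lie in $\bPi$; every region is a vertex of $\EP$, and a family of $\bPi$-labels at $\re$ that is pairwise adjacent spans a cube all of whose labels lie in $\bPi$ and hence a cube of $\EP$, so $\lk(\re,\EP)$ is the \emph{full} subcomplex of $\lk(\re,\SP)$ on the partition-labeled edges at $\re$. This is precisely local convexity of $\EP$, and a locally convex subcomplex of the NPC complex $\SP$ is itself NPC. To finish I show $\EP$ is simply connected: collapsing the hyperplanes $H_{\WP_i}$ one at a time inside $\EP$, as in (i), and using that $\EP$ has no $V$-labeled edges, collapses $\EP$ all the way to a point, so $\EP$ is contractible; a connected, simply connected NPC complex is CAT(0).

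For (iii), given a maximal pairwise adjacent family $\mathcal A=\{A_1,\dots,A_n\}\subseteq V\cup\bPi$, I must produce a unique maximal cube of $\SP$ with edge-label set $\mathcal A$. For existence I build a region $\re$ incident to all the $A_i$: the generators among the $A_j$ each require $\re$ to be a terminal region for that generator, and I must check these requirements are simultaneously satisfiable (here one uses that the $A_j$ are pairwise adjacent together with the structure of $\G$-Whitehead partitions, in particular that a split generator is never adjacent to the partition that splits it), after which rule (iii) attaches the desired cube. For uniqueness I show that any cube of $\SP$ with edge-label set $\mathcal A$ shares a corner with this one, and then, since an edge at a fixed vertex is determined by its label up to the two-fold ambiguity resolved by $\re\mapsto\re^{*v}$, the two cubes agree. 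This is the $\SP$-analogue of the statement that $\SaG$ has exactly one cube for each clique of $\G$.

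The step I expect to be the real obstacle underlies both (i) and (iii): one must verify that the prescription ``attach a cube for every pairwise adjacent family of edge labels at $\re$'' actually defines a cube complex --- that an edge at $\re$ is recovered from its label (with care for the loop edges coming from generators split by no partition of $\bPi$), that the cube attached to a given family is unique, and that these cubes are consistently glued along common faces, so that the links really are simplicial complexes and the corner-matching in (iii) goes through. This is exactly where the hypothesis that $\bPi$ is pairwise \emph{compatible} is used, through the dichotomy that an incompatible, non-adjacent pair $\WP_i,\WP_j$ satisfies $P_i^\times\cap P_j^\times=\emptyset$ for precisely one choice of sides; I would lean on the detailed region calculus developed in \cite{CSV} and Section~3 of \cite{BCV}, which all three parts rely on.
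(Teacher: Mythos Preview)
The paper does not actually prove this proposition: immediately before the statement it says the proofs ``can be found in either \cite{CSV} or Section~3 of \cite{BCV}'', and no argument is given. So there is nothing in the paper to compare your proposal against beyond that citation.

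That said, your sketch is a faithful outline of how those cited proofs run. The induction in (i) via iterated hyperplane collapse is exactly the mechanism the paper itself summarizes just before the proposition (invoking Theorem~4.6 of \cite{CSV}); your flag-link argument for NPC and your full-subcomplex argument for local convexity in (ii) are the standard ones used in \cite{CSV}; and your existence/uniqueness strategy for (iii), building a region incident to a maximal adjacent family and then arguing uniqueness from the label data, is what is carried out as Corollary~3.7 of \cite{BCV} (which the paper cites later in Lemma~\ref{lem:ExistMaxTorus}). You are also right that the genuine work hides in verifying that the attaching rule produces an honest cube complex with simplicial links; this is precisely the ``region calculus'' of \cite{CSV} that you propose to invoke, so your plan and the paper's treatment coincide.
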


The complex $\EP$ plays a role in $\SP$ analogous to that of a maximal tree in a graph.  After collapsing a maximal tree one obtains a wedge of $n$ circles, \emph{i.e.}, the Salvetti complex for $F_n$.  Just as in a graph where there are many maximal trees, there are many subcomplexes in $\SP$ whose collapse results in $\SaG$.  Equivalently, each subcomplex corresponds to a different identification of the underlying cube complex with $\SP$.  

\begin{definition} A \emph{ $\G$-complex} is any cube complex combinatorially isomorphic to $\SP$ for some $\bPi$.
\end{definition}
Thus we will refer to a cube complex as a $\G$-complex whenever there is no preferred identification with $\SP$, or equivalently, no preferred collapse map.  

\subsection{Characteristic cycles}
Following \cite{BCV} we define a \emph{characteristic cycle for v} to be an edge-path $\chi_v\subset \SP$  comprised of any $e_v$ edge followed by a minimal length edge-path between its endpoints in $\EP$. The edges appearing in $\chi_v\cap \EP$ are labeled by exactly those partitions that split $v$. By construction, $\chi_v$ maps onto the loop labeled by $v$ under the collapse map $c_\pi\colon \SP\rightarrow \SaG$.  Thus, $\chi_v$ is homotopic to a locally geodesic loop $\beta_v$ representing $v$, which is the image of an axis for $v$. 

Let $p\colon \USP\rightarrow \SP$ be the universal covering projection. Then $\AG$ acts on $\USP$ by semisimple isometries and for any $g\in \AG$, let $\Min(g)$ be the min set for the action of $g$ on  $\USP$. We will need the following lemma, the proof of which can be found in \cite{BCV}. 
\begin{lemma}[\cite{BCV}, Lemma 3.10]\label{lem:BCV3.9}Let $A\in V\cup \Pi$ be a label, and suppose $v\in \max(A)$.  
\begin{enumerate}[(i)]
\item For each edge labeled $A$ there is a unique edge $e_v$ such that $e_A$ and $e_v$ are contained in a local geodesic $\beta_v$, hence every characteristic cycle for $v$ containing $e_v$ contains $e_A$, and vice versa.
\item The carrier $\kappa(H_A)$ is contained in the image of $\Min(v)$ and the induced cubical structures on $H_A$ and $H_v$ are isomorphic.  
\end{enumerate}
\end{lemma}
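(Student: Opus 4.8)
The plan is as follows. When $A=v\in V$ we have $\max(A)=\{v\}$ and $e_A=e_v$, so both claims are trivial; I would therefore assume $A=\WP\in\bPi$ and $v\in\max(\WP)\subseteq\Sing(\WP)$, so that $\WP$ splits $v$. The observation to build everything on is that $\WP$ is \emph{not} adjacent to any other partition splitting $v$: if $\WQ\neq\WP$ split $v$ and were adjacent to $\WP$, then the base $m$ of $\WQ$ lies in $\max(\WQ)$, so adjacency forces $m$ to commute with $v$, i.e.\ $v\in\lk(m)$; but $v\in\Sing(\WQ)\setminus\{m\}$ gives $v\leq_f m$, i.e.\ $\lk(v)\subseteq\lk(m)$, whence $m\in\lk(v)\subseteq\lk(m)$ --- impossible. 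Inspecting the ``forbidden'' side of a non-adjacent compatible pair, one sees further that both endpoints of \emph{any} edge dual to $H_\WP$ lie on the $v$-side of every partition that neither splits $v$ nor contains $v$ in its link, so, up to the sides along partitions with $v$ in their link, every such edge looks exactly like the $\WP$-crossing of a characteristic cycle for $v$.

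For Part (i) I would argue combinatorially. Given an edge $e_A$ dual to $H_\WP$ with endpoint regions $\rho_1,\rho_2$ --- say $\rho_1$ on the $v$-side of $\WP$ --- let $\re$ be the terminal region for $v$ whose sides along the partitions having $v$ in their link agree with those of $\rho_1$, and let $e_v$ be the edge labeled $v$ from $\re$ to $\re^{*v}$. The regions $\re^{*v}$ and $\re$ differ exactly along the partitions splitting $v$, which are pairwise non-adjacent with $\WP$; so a minimal-length edge path $\gamma$ in $\EP$ from $\re^{*v}$ to $\re$ crosses $H_\WP$ exactly once, and the forbidden-side constraints among those partitions force $\WP$ into a definite position in the sequence of flips, hence force $\gamma$ to cross $H_\WP$ along one particular edge --- which, by construction, is $e_A$. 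Thus $\chi_v=e_v\cdot\gamma$ is a characteristic cycle through $e_A$, and the local geodesic $\beta_v$ homotopic to it contains both $e_A$ and $e_v$. For uniqueness and the two ``characteristic cycle'' assertions: any local geodesic representing $v$ and containing $e_A$ is homotopic to some characteristic cycle $\chi_v'=e_v'\cdot\gamma'$ whose single $\WP$-crossing is an edge dual to $H_\WP$; since $\gamma'\subseteq\EP$ flips only partitions splitting $v$ and runs through $e_A$, the terminal region carrying $e_v'$ must agree with $\rho_1$ along every partition not splitting $v$, which forces it to equal $\re$ and hence $e_v'=e_v$. Running the forced-crossing argument from $e_v$ then shows that a characteristic cycle contains $e_v$ precisely when it crosses $H_\WP$ at $e_A$.

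For Part (ii) I would deduce the containment from Part (i) together with a cube-lifting argument. Each edge $e_A$ dual to $H_A$ lies on a local geodesic $\beta_v$, hence on the image of an axis of a conjugate $gvg\inv$ of $v$; as $\Min(gvg\inv)=g\cdot\Min(v)$, that axis, and so $e_A$, lies in $p(\Min(v))$. For the rest of the carrier, Proposition \ref{prop:BlowupFacts}(iii) writes each cube of $\kappa(H_A)$ as a product of an $e_A$-edge with edges whose labels are adjacent to $\WP$; every element of $\max(\WP)$ has link $\lk(m)$, so each such label commutes with $v$, and since $v$ acts on $\Min(v)=\R\times Y$ by translating the $\R$-factor and fixing $Y$, it carries a suitable lift of the cube into $\Min(v)$ (translating the $e_A$-direction, fixing the transverse ones). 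Hence $\kappa(H_A)\subseteq p(\Min(v))$. Finally I would promote the bijection $e_A\mapsto e_v$ of Part (i) to a label-preserving combinatorial isomorphism $\kappa(H_A)\to\kappa(H_v)$, matching the transverse commuting directions through the product structure of $\Min(v)$; restricting to midcubes gives the asserted isomorphism $H_A\cong H_v$ of cubical structures.

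The step I expect to be the main obstacle is the forced-crossing statement in Part (i): that a minimal-length edge path in $\EP$ between the terminal regions $\re^{*v}$ and $\re$ meets $H_\WP$ in an edge determined by $\re$ alone (indeed only by its sides along partitions with $v$ in their link). This rests on careful bookkeeping with the forbidden sides of the pairwise $\WP$-non-adjacent partitions splitting $v$, and on verifying that these constraints single out a well-defined position for $\WP$ among the flips. Granting this, the combinatorial uniqueness in (i), the containment $\kappa(H_A)\subseteq p(\Min(v))$, and the upgrade to a cubical isomorphism all follow routinely from the product structure of minimal sets in the CAT(0) cube complex $\USP$.
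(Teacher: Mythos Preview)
The paper does not supply its own proof of this lemma: it is quoted as Lemma~3.10 of \cite{BCV}, and the surrounding text explicitly says ``the proof of which can be found in \cite{BCV}.'' There is therefore no in-paper argument to compare your proposal against. Your outline follows the natural combinatorial route through regions, characteristic cycles, and the product structure of min sets, and is consistent with the blowup machinery set up in \cite{CSV} and \cite{BCV}; a substantive comparison would require consulting the original proof in \cite{BCV} itself.
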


In particular, in the twist-dominant case, by Lemma \ref{lem:TDMax} we have
\begin{corollary}\label{cor:TDGeodesic}Let $v$ be twist-dominant.  Then every characteristic cycle for $v$ is the image of an axis for $v$, and the image of $Min(v)$ in $\SP$ is the union of all carriers $\kappa(H_A)$ such that $v\in \max(A)$.
\end{corollary}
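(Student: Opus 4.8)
The plan is to use twist-dominance, via Lemma~\ref{lem:TDMax}, to restrict severely which edge-labels can occur, then to settle the first assertion by a link-condition (corner) argument and the second by a parallel-axes argument. First I would record the key consequence of Lemma~\ref{lem:TDMax}: the labels $A\in V\cup\bPi$ with $v\in\max(A)$ are precisely $v$ itself together with the partitions of $\bPi$ based at $v$. Indeed, if $A\in V$ then $\max(A)=\{A\}$ forces $A=v$; and if $A=\WP\in\bPi$ then $v\in\max(\WP)\subseteq\Sing(\WP)$, so $\WP$ splits the twist-dominant generator $v$, whence by Lemma~\ref{lem:TDMax} (and the remark following it) $\WP$ is based at $v$. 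Moreover, since $\Gamma$ is simple we have $v\notin\lk(v)$, so these labels are pairwise non-adjacent: adjacency of $v$ with such a partition $\WP$ would require $v\in\lk(\WP)=\lk(v)^{\pm}$, and adjacency of two such partitions would require (both having maximal element $v$) that $v$ commute with $v$, i.e.\ $v\in\lk(v)$. Consequently no cube of $\SP$ contains two distinct edges both of whose labels lie in this set.

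For the first assertion, I would write a characteristic cycle as $\chi_v=e_v\cdot\gamma$, where $\gamma$ is a minimal-length edge-path in $\EP$ joining the endpoints of $e_v$. By definition the edges of $\gamma$ carry exactly the labels of the partitions of $\bPi$ splitting $v$, which by the previous paragraph are partitions based at $v$; hence every edge of $\chi_v$ has a label $A$ with $v\in\max(A)$. Then no two consecutive edges of $\chi_v$ span a square, so at every vertex of $\chi_v$ the incoming and outgoing directions are not joined by an edge in the link and therefore lie at distance at least $\pi$ --- exactly the condition for $\chi_v$ to be a local geodesic loop. Since $\chi_v$ maps onto the $v$-loop of $\SaG$ under the standard collapse, it represents the conjugacy class of $v$; a lift of $\chi_v$ to the CAT(0) space $\USP$ is therefore a complete geodesic invariant under a conjugate of $v$, i.e.\ an axis, and $\chi_v$ is its image.

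For the second assertion, one inclusion is immediate: by Lemma~\ref{lem:BCV3.9}(ii), for each $A$ with $v\in\max(A)$ the carrier $\kappa(H_A)$ lies in the image of $\Min(v)$, so $\bigcup_{v\in\max(A)}\kappa(H_A)$ does as well. For the reverse inclusion I would fix a characteristic cycle $\chi_v$ and, using the first assertion, lift it to a geodesic edge-path axis $\tilde\ell$ of $v$ in $\USP$. Over one period $\tilde\ell$ crosses a $v$-hyperplane and a $\WP$-hyperplane for each partition of $\bPi$ based at $v$, so the hyperplanes of $\USP$ that $\tilde\ell$ crosses realize precisely the labels $A$ with $v\in\max(A)$, and their carriers cover $\tilde\ell$. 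Now $\Min(v)$ is the union of the axes of $v$, and each such axis is parallel to $\tilde\ell$; since parallel geodesic lines in a CAT(0) cube complex cross the same hyperplanes, and each hyperplane crossed by $\tilde\ell$ meets $\tilde\ell$ orthogonally (being dual to an edge of it) and hence meets the flat strip it spans with any parallel axis orthogonally, the width-one carriers covering $\tilde\ell$ also cover that strip. Thus $\Min(v)$ is contained in the union of the carriers $\kappa(\tilde H)$ over the hyperplanes $\tilde H$ crossed by $\tilde\ell$; applying $p$, and using that $p(\kappa(\tilde H))=\kappa(H_A)$ when $A$ is the label of $\tilde H$, places the image of $\Min(v)$ inside $\bigcup_{v\in\max(A)}\kappa(H_A)$, as required.

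I expect the main obstacle to be this last inclusion, and within it the passage from the combinatorial axis $\tilde\ell$ to an arbitrary axis of $v$: one must know that parallel geodesic lines in a CAT(0) cube complex cross exactly the same hyperplanes, and that the carrier of an edge-dual hyperplane --- having width one transverse to that edge --- genuinely sweeps out the whole flat strip between $\tilde\ell$ and the given axis, so that no point of $\Min(v)$ escapes the union of carriers. Everything else is a direct consequence of Lemma~\ref{lem:TDMax}, the definition of a characteristic cycle, and Lemma~\ref{lem:BCV3.9}.
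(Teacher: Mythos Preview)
Your argument is correct and supplies exactly the details the paper omits: the corollary is stated there without proof, as an immediate consequence of Lemmas~\ref{lem:TDMax} and~\ref{lem:BCV3.9}, and your link-condition argument for the first assertion together with the parallel-axes argument for the second are the natural way to unpack those lemmas. The step you flag at the end is the only genuine subtlety, and it closes by invoking the second clause of Lemma~\ref{lem:BCV3.9}(ii) more fully: since $H_A\cong H_v$ cubically for every label $A$ with $v\in\max(A)$, each carrier $\kappa(H_A)\cong H_v\times[0,1]$ is a full-width slab of the image of $\Min(v)$, one per edge of the locally geodesic loop $\chi_v$, and these slabs therefore exhaust that image.
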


\subsection{Skewed $\G$-complexes}A skewed metric on a blowup $\SP$ replaces cubes with general parallelotopes, subject to certain conditions which we call \emph{allowable}. Parallelotopes are identified in the same way as in $\SP$, and faces must still be glued by isometries. An $n$-dimensional parallelotope is the image of $[0,1]^n$ under an element of $\GL_n(\R)$.  A parallelotope in which all edges are orthogonal will be called an \emph{orthotope}. 

There is a convenient way to describe when a parallelotope is allowable in terms of which matrices in $\GL_n(\R)$ may be used to obtain  it. Given a blowup $\SP$, fix a total ordering $\preceq$ on the edge labels in $V\cup \bPi$ extending the twist ordering defined in the previous section. That is,  the total order $\preceq$ satisfies $A\leq_tB$ implies $A\preceq B$. 

Let $C$ be a maximal cube in $\SP$, with edge labels $\{A_1,\ldots, A_n\}$ such that $A_{i}\succeq A_{i+1}$ for all $i$.  We can think of $C$ as the unit cube in $\R^n$, where each edge labeled $A_i$ is parallel to the standard basis vector $e_i\in \R^n$.  Any matrix $M$ which preserves the ordering can be written as $M=DU$ where $U$ is an upper triangular unipotent matrix and $D$ is diagonal (and nonsingular).  Moreover, any parallelotope is isometric to a parallelotope obtained by such an $M$.  The image of $C$ under left multiplication by $M$ is allowable if and only if $U=(u_{ij})$ satisfies \[u_{ij}\neq 0 \text{ for $i<j$}\Rightarrow A_i\geq_tA_j\]  Note that in \cite{BCV}, we opted for a definition of allowable which does not rely on a choice of total ordering.  Thus, the description above is equivalent to that used in Section 5, but not what is stated there.  

\begin{definition}\label{def:Allowable} Let $\F$ be a choice of parallelotope metric for each cube in $\SP$ such that
\begin{enumerate}[(i)]
\item If $F$ is a face of $C$, the restriction of the metric from $C$ agrees with the parallelotope metric of $F$.
\item If $\max(A)=\{v\}$ is twist-dominant, then for any $B$ adjacent to
$A$, the angle between $e_A$ and $e_B$ is equal to the angle between $e_v$ and $e_B$.
\end{enumerate} We call $\F$ an \emph{allowable parallelotope structure}, and the resulting path metric on $\SP$ is  called an \emph{allowable metric}. 
\end{definition}

Note that condition (ii)  can always be arranged because of Corollary \ref{cor:TDGeodesic}, and implies that characteristic cycles for $v$ remain locally geodesic in any allowable metric.    A consequence of Definition \ref{def:Allowable} is that every allowable metric is locally CAT(0).  For a proof of this fact, and more details on the construction of allowable metrics, see Section 5 of \cite{BCV}.

\begin{definition} A \emph{skewed $\G$-complex} is any metric space isometric to $(\SP,d)$ where $d$ is an allowable metric. 
\end{definition}

Let $(\SP,\F)$ be an allowable parallelotope structure on $\SP$.  If every parallelotope in $\F$ is an orthotope, we call $\F$ \emph{rectilinear}.   In  Section 5 of \cite{BCV}, we define a \emph{straightening map}  $\sigma$ which replaces every allowable parallelotope structure $(\SP,\F)$ with a rectilinear parallelotope structure $(\SP,\sigma(\F))=(\SP,\E)$ possessing the same edge lengths.  In fact, we define a 1-parameter family of allowable parallelotope structures $\sigma_t(\F)$ such that $\sigma_0(\F)=\F$ and $\sigma_1(\F)=\E$. In terms of the matrices described above, straightening changes each entry $u_{ij}$ to 0 where $i\neq j$ is such that $A_i\geq_t A_j$.   For any $A\in V\cup \bPi$ and any $t$, the length of $e_A$ in $\sigma_t(\F)$ is the same as in $\F$. See section 5 of \cite{BCV} for details.

\subsection{The structure of the isometry group}
Let $(X,d)$ be a skewed $\G$-complex.  The isometries of $X$ form a topological group $\Isom(X)$ with the $C^0$-topology induced by $d$.  Denote by $\Isom_0(X)\leq \Isom(X)$ the connected component of the identity.  $\Isom_0(X)$ is a normal subgroup and the quotient $\Isom(X)/\Isom_0(X)$ is isomorphic to $\pi_0(\Isom(X))$, the group of path components of $\Isom(X)$.

The general structure of $\Isom(X)$ relies on the fact that skewed $\G$-complexes have the geodesic extension property, which we now prove. 

\begin{lemma}\label{lem:NoFreeFaces} $(X,d)$ has the geodesic extension property.
\end{lemma}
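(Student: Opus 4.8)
The plan is to show that $X$ has the geodesic extension property, meaning every geodesic segment can be prolonged to a bi-infinite local geodesic, or equivalently that $X$ has no free faces. Since $X$ is isometric to $(\SP,d)$ for an allowable metric $d$, it suffices to check this for the blowup $\SP$ with its parallelotope structure. A cube complex with a piecewise-Euclidean metric has the geodesic extension property precisely when the link of every vertex has no "free" simplex — i.e., no maximal simplex that is a face of exactly one other simplex, or more precisely no simplex $\tau$ whose closed star is a single top-dimensional simplex; concretely, in the cube-complex setting one needs: every edge of $X$ is contained in at least two distinct maximal cubes, or rather every codimension-one face of a maximal parallelotope is shared with another maximal parallelotope. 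So the crux is a combinatorial statement about which collections of pairwise-adjacent edge labels occur at a vertex of $\SP$.

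First I would recall the local structure: by Proposition \ref{prop:BlowupFacts}(iii), maximal cubes at a vertex $\re$ correspond to maximal collections of pairwise adjacent edge labels in $V\cup\bPi$ incident to $\re$. A free face would correspond to a simplex $\sigma$ in the link $\lk(\re)$ contained in a unique maximal simplex. I would translate "free face" into the language of edge labels: there is a set $S$ of pairwise-adjacent labels at $\re$, and a unique label $A$ such that $S\cup\{A\}$ is still pairwise-adjacent and $S\cup\{A\}$ is maximal among such sets, with no other extension of $S$. The goal is to rule this out by producing, for any such $S$ and any extension $A$, a second distinct label $A'$ adjacent to all of $S$.

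The key mechanism is the switching operation on regions and partitions. For a label $A=\WP_i\in\bPi$ incident to $\re$, the edge labeled $\WP_i$ at $\re$ goes to the region $\re'$ obtained by switching sides along $\WP_i$; the labels adjacent to $\WP_i$ are governed by its $\max$-set, and compatibility/adjacency of partitions is a symmetric condition on $\max$-sets. For a label $A=v\in V$, the edge goes from a terminal region $\re$ for $v$ to $\re^{*v}$. I would argue case by case: given a face determined by $S$ living in a cube $C$ with label set $S\cup\{A\}$, I want another maximal cube containing the face $S$. If $A=v\in V$ and $v$ is not twist-dominant, one can often "fold" to produce a partition split by $v$ that is also adjacent to $S$; if $v$ is twist-dominant, Corollary \ref{cor:TDGeodesic} says the carrier of $H_A$ (for $v\in\max(A)$) is the image of $\Min(v)$, a union of carriers, which forces multiple cubes through the relevant faces. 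The systematic way is: use Lemma \ref{lem:BCV3.9} to pass between an $e_A$-edge and its companion $e_v$-edge lying on a common local geodesic $\beta_v$, so that a putative free face is "pushed" along $\beta_v$ and must reappear; since $\beta_v$ closes up to the characteristic cycle, the face cannot be free. I expect the main obstacle to be handling the labels $A\in\bPi$ corresponding to partition-edges: one must show that if $\WP_i$ is the unique label extending $S$, then some other partition or some generator adjacent to $\max(\WP_i)$ also extends $S$ — this requires unwinding the definition of compatible collections and the region structure, and checking that the hypothesis "no other extension" contradicts conditions (ii)–(iii) in the definition of a $\G$-Whitehead partition (neither side is a singleton, and components of $\Gamma\setminus\st(m)$ stay together). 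A clean alternative, which I would pursue if the direct combinatorics gets unwieldy, is to reduce via the straightening homotopy $\sigma_t$ to the rectilinear case $(\SP,\E)$ — edge lengths are preserved and the property of having no free faces is purely combinatorial, independent of the metric — and then invoke the corresponding statement for $\G$-complexes with the standard cubical metric, which is implicit in \cite{CSV}; in that setting one checks directly that every hyperplane carrier in a blowup is two-sided and every maximal cube's codimension-one faces are shared, using Proposition \ref{prop:BlowupFacts}(ii) and the fact that $\EP$ is CAT(0) and locally convex together with the explicit region combinatorics.

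So concretely the steps are: (1) reduce the geodesic extension property to the no-free-faces condition in the link of every vertex of $\SP$; (2) reduce, via $\sigma_t$, to the rectilinear/cubical model if convenient; (3) given a maximal cube $C$ at a vertex $\re$ and a codimension-one face $F$ of $C$ with the "missing" label $A$, split into the cases $A\in V$ twist-dominant, $A\in V$ not twist-dominant, and $A\in\bPi$; (4) in each case construct a second maximal cube containing $F$ — for the twist-dominant and generator cases using Lemma \ref{lem:BCV3.9} and Corollary \ref{cor:TDGeodesic} to move along $\beta_v$, and for the partition case using the compatibility structure of $\bPi$ and the non-singleton condition on the sides of $\WP$; (5) conclude that no face is free, hence every local geodesic extends. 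The part I am least sure can be dispatched without real work is step (4) in the partition case, so that is where I would concentrate the detailed argument.
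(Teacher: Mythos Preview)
Your core mechanism is right—the characteristic cycle through an edge, via Lemma~\ref{lem:BCV3.9}, is exactly what the paper uses—but you are making the argument far more complicated than necessary, and the case analysis you propose obscures the one observation that makes everything work uniformly.

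The paper's proof is a single paragraph with no case split. Given a maximal parallelotope $C$ with labels $\{A_1,\dots,A_k\}$ and a codimension-one face $F$ obtained by dropping $A_i$, pick any $v_i\in\max(A_i)$. By Lemma~\ref{lem:BCV3.9}(i), the edge $e_{A_i}$ lies on a characteristic cycle $\chi_{v_i}$; let $B$ be the label of the adjacent edge on that cycle. The point you are missing is that \emph{every} label $B$ on $\chi_{v_i}$ satisfies $\max(B)\geq_f\max(A_i)$ (the edges in $\chi_{v_i}\cap\EP$ are labeled by partitions that split $v_i$, and for such a partition $\WP$ one has $v_i\leq_f m$ for $m\in\max(\WP)$). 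Since adjacency of labels is determined by the links of their $\max$-sets, $\max(B)\geq_f\max(A_i)$ immediately forces $B$ to be adjacent to every $A_j$, $j\neq i$. So $\{B\}\cup\{A_j:j\neq i\}$ spans another maximal parallelotope containing $F$, and $F$ is not free. This handles vertex labels and partition labels in one stroke; there is no need to treat twist-dominant generators separately, nor to invoke Corollary~\ref{cor:TDGeodesic}, nor to unwind compatibility conditions or the non-singleton clause for the partition case you were worried about.

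Your reduction to the rectilinear model via $\sigma_t$ is harmless but also unnecessary: the no-free-faces condition is combinatorial, and the paper simply cites \cite[Proposition~II.5.10]{BH} for the equivalence with geodesic extension (using that there are finitely many isometry types of parallelotopes). In short, keep your step~(1), drop steps~(2)--(3), and replace the case analysis in~(4) by the single fold-ordering observation above.
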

\begin{proof} Let $(\SP,\F)$ be an allowable parallelotope structure on $(X,d)$ so that $d=d_\F$. Since $\F$ has only finitely many isometry types of parallelotopes, the geodesic extension property is equivalent to not having free faces, according to  \cite[Proposition II.5.10]{BH}.  Let $C$ be a maximal parallelotope of $\SP$ with edge labels $\{A_1,\ldots, A_k\}$. By Lemma \ref{lem:BCV3.9}, each edge $e_{A_i}$ lies on a characteristic cycle $\chi_{v_i}$ for some $v_i\in \max(A_i)$. Since every edge label $B$ on $\chi_{v_i}$ satisfies $\max(B)\geq_f \max(A_i)$,  $B$ is adjacent to $A_j$ for every $j\neq i$. This implies that each face of $C$ is contained in the boundary of at least two maximal parallelotopes, hence is not free.
\end{proof}

Since $X$ has the geodesic extension property, it follows from Lemma II.6.16 of \cite{BH} that the min set of every element of $\Cent(\Gamma)$ is all of $X$.  By the flat torus theorem (Theorem II.7.1 ad loc.), $X$ decomposes as an orthogonal product $X=X_0\times \T_{\Cent(\Gamma)}$, where $\T_{\Cent(\Gamma)}$ is a torus of dimension equal to  $|\Cent(\Gamma)|$. We call $\T_{\Cent(\Gamma)}$ the \emph{central torus} of $X$.  Regarding $\T_{\Cent(\Gamma)}$ as a topological group, the action of $\T_{\Cent(\G)}$ on itself by translations induces a family of  isometries of $X$. Explicitly, given $\tau\in \T_{\Cent(\Gamma)}$, the map \[L_\tau\colon X_0\times \T_{\Cent(\G)}\rightarrow X_0\times \T_{\Cent(\G)}\] defined by $L_\tau(x,t)=(x,t+\tau)$ will be called the translation by $\tau$.  The collection of all $L_\tau$ generate an abelian subgroup of $\Isom(X)$ isomorphic to $\T_{\Cent(\G)}$ itself. Since every $\tau \in \T_{\Cent(\G)}$ lies on a 1-parameter subgroup, the translation subgroup of $\Isom(X)$ is contained in $\Isom_0(X)$. In fact, we have

\begin{proposition}\label{prop:FiniteComps}The group $\Isom(X)$ has finitely many connected components, and $\Isom_0(X)$ is the group of translations of $\T_{\Cent(\G)}$.
\end{proposition}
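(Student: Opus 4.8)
The plan is to prove the two assertions separately, deferring the bulk of the work to Theorem~\ref{thm:main} for the second one. For the finiteness of $\pi_0(\Isom(X))$, I would argue as follows. Since $X$ is a locally CAT(0) cube complex built from finitely many isometry types of parallelotopes, there are only finitely many isometry types of vertex links, and every isometry of $X$ permutes the (locally finite, discrete) set of cube centers of each dimension in a way that is determined, up to the identity component, by a combinatorial datum. More precisely, fix a vertex $x_0$; an isometry $f$ moves $x_0$ to some point $f(x_0)$, and after composing with a translation of the central torus $\T_{\Cent(\G)}$ (which acts cocompactly on the fibers of the product decomposition $X = X_0 \times \T_{\Cent(\G)}$) we may assume $f(x_0)$ lies in a fixed compact fundamental domain. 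The isometry is then determined by its germ at $f(x_0)$, i.e.\ by a simplicial isomorphism between two vertex links in a finite set, of which there are finitely many. Hence $\Isom(X)/\langle\text{translations}\rangle$ has finitely many path components; combined with the fact (already established in the text) that the translation subgroup lies in $\Isom_0(X)$, this gives that $\pi_0(\Isom(X))$ is finite.

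For the identification $\Isom_0(X) = \{L_\tau : \tau \in \T_{\Cent(\G)}\}$, the inclusion $\supseteq$ is already observed in the paragraph preceding the proposition, since every $\tau$ lies on a $1$-parameter subgroup. For the reverse inclusion, let $f \in \Isom_0(X)$. Being in the identity component, $f$ is isotopic to the identity through isometries, hence in particular $f$ is homotopic to the identity. Now I would invoke Theorem~\ref{thm:main}: wait, that is the wrong direction --- Theorem~\ref{thm:main} says homotopic-to-identity implies in $\Isom_0(X)$, not the converse. So instead I need a direct argument that any element of $\Isom_0(X)$ is a central translation. The cleanest route: an isometry isotopic to the identity acts trivially on $\pi_1(X) = \AG$ (up to inner automorphism, and in fact trivially since the isotopy gives a based homotopy after a choice), and more importantly it preserves every hyperplane and every parallelotope setwise (since the set of hyperplanes is discrete in the appropriate sense and the isotopy cannot move a hyperplane off itself continuously while remaining an isometry). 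An isometry of a parallelotope fixing it setwise and isotopic to the identity, compatibly across faces, must be the identity on the ``$X_0$ part'' and a translation on the central torus directions --- this uses condition (ii) of Definition~\ref{def:Allowable} and the product decomposition. Carrying this out, $f = L_\tau$ for the $\tau$ by which $f$ translates the central torus factor.

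In more detail on that last point: write $f = (f_0, f_1)$ relative to $X = X_0 \times \T_{\Cent(\G)}$; since the decomposition is canonical (the central torus factor is intrinsic, being the $\Min$ of $\Cent(\AG)$), any isometry respects it, so $f_0 \in \Isom(X_0)$ and $f_1 \in \Isom(\T_{\Cent(\G)})$, each isotopic to the identity. The isometry group of the flat torus $\T_{\Cent(\G)}$ has identity component equal to its translations, so $f_1 = L_\tau$ for some $\tau$. It remains to show $f_0 = \id_{X_0}$. Here $X_0$ is itself a skewed $\G$-complex structure (for the graph with the central clique removed, roughly) with \emph{trivial} center, so it has no nontrivial central torus; I claim $\Isom_0(X_0)$ is trivial. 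This follows because $\Isom(X_0)$ is discrete: a space with the geodesic extension property and finitely many parallelotope types whose min set of central elements is \emph{not} all of the space (since the center is trivial) --- one shows $\Isom(X_0)$ acts properly discontinuously, e.g.\ via the fact that an isometry fixing a vertex and its link pointwise is the identity (geodesic extension), and the link is finite. The main obstacle I anticipate is precisely this last step --- rigorously showing $\Isom_0(X_0)$ is trivial, or equivalently that $\Isom(X)$ modulo central translations is discrete --- since it requires carefully using the allowability conditions to rule out ``shearing'' $1$-parameter families of isometries in the non-central directions; the twist-dominant condition (ii) of Definition~\ref{def:Allowable} is what prevents such shears, and making that precise is the heart of the matter.
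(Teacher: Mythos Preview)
The paper's proof is a two-line citation: it invokes Theorem II.6.17 of Bridson--Haefliger, which says that for \emph{any} compact, locally CAT(0) space with the geodesic extension property, the isometry group has finitely many components and the identity component consists exactly of the Clifford translations (i.e.\ translations of the canonical Euclidean/central-torus factor). The only input specific to skewed $\G$-complexes is Lemma~\ref{lem:NoFreeFaces}, which provides the geodesic extension property. Everything you are trying to do by hand is already packaged in that general theorem.

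Your direct approach is not wrong in spirit, but it is incomplete in exactly the place you flag, and there is an additional gap you pass over. First, the assertion that every isometry of $X=X_0\times\T_{\Cent(\G)}$ splits as a product $(f_0,f_1)$ is not automatic: you need that $\T_{\Cent(\G)}$ is the \emph{entire} Euclidean de~Rham factor of $X$ (equivalently, that $X_0$ has no Euclidean factor), which is precisely one of the things the Bridson--Haefliger argument establishes via Clifford translations. Second, even granting the splitting, showing that $\Isom_0(X_0)$ is trivial is, as you say, the heart of the matter --- and this again is exactly what BH II.6.17 gives you once you know $X_0$ has the geodesic extension property and trivial center. Your sketch for finiteness also has a loose step: composing with a central translation does not move $f(x_0)$ within the $X_0$-factor, so you cannot reduce to a ``fixed compact fundamental domain'' in the way you describe; you still need compactness of $\Isom(X)$ (Arzel\`a--Ascoli) plus discreteness modulo $\Isom_0$, which is once more the content of the cited theorem.

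You were right to reject invoking Theorem~\ref{thm:main}: in the paper's logical order, Proposition~\ref{prop:FiniteComps} is used to \emph{prove} Theorem~\ref{thm:main} (via Theorem~\ref{thm:IsoID}), so that would be circular. The upshot is that rather than reinventing the structure theory of isometry groups of compact NPC spaces, you should simply cite \cite[Theorem~II.6.17]{BH} and Lemma~\ref{lem:NoFreeFaces}.
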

\begin{proof}  Theorem II.6.17 of \cite{BH} states that the isometry group of any compact, locally CAT(0) space with the geodesic extension property is a topological group with finitely many connected components.  Moreover, the connected component of the identity is the group of translations by the central torus.  The proposition now follows from Lemma \ref{lem:NoFreeFaces}.  
\end{proof}

Since $X$ is a $K(\AG,1)$, we can identify the group of self-homotopy equivalences of $X$ up to homotopy with $\Out(\AG)$. We then obtain a homomorphism \[\psi\colon \Isom(X)\rightarrow \Out(\AG)\]

Every element of $\Isom_0(X)$ is isotopic to the identity, so $\Isom_0(X)\leq \ker\psi$. Combined with Proposition \ref{prop:FiniteComps} and the preceding discussion, Theorem \ref{thm:main}  and Corollary \ref{cor:main} now easily follow from

\begin{thm}\label{thm:IsoID}$\ker\psi=\Isom_0(X)$.
\end{thm}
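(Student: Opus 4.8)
The plan is to show $\ker\psi\subseteq\Isom_0(X)$ (the reverse inclusion is already noted), which by Proposition \ref{prop:FiniteComps} amounts to proving that any isometry $f$ of $X$ acting trivially on $\pi_1$ up to conjugacy—equivalently, homotopic to the identity—must be a translation of the central torus $\T_{\Cent(\G)}$. Following the outline, the strategy is to lift $f$ to the universal cover $\widetilde f\colon\USP\rightarrow\USP$ and track its action on the minsets $\Min(g)$ for $g$ ranging over maximal abelian special subgroups. Since $f\simeq\id$, we may choose the lift $\widetilde f$ so that it is $\AG$-equivariant with respect to the identity automorphism, i.e.\ $\widetilde f(gx)=g\widetilde f(x)$ for all $g\in\AG$. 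Consequently $\widetilde f$ preserves $\Min(g)$ for every $g\in\AG$, and hence preserves every maximal torus of $X$ (the images of the minsets of maximal abelian special subgroups, as set up in \S\ref{sec:MaxTori}).

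The core of the argument is then local-to-global. First I would show that the restriction of $f$ to any maximal torus $T\subset X$ is a translation: $T$ is flat, $f(T)=T$, and an $\AG$-equivariant isometry of a flat torus covering commuting with the deck group acting by the full translation lattice must itself be a translation (any linear part would have to centralize the lattice and fix the marking). Next, using the results promised in \S\ref{sec:MaxTori}—that two maximal tori $T_\Delta, T_{\Delta'}$ meet in a locally convex set homotopy equivalent to the torus for $A_{\Delta\cap\Delta'}$—I would argue that the translation vector of $f|_{T_\Delta}$, when restricted to the overlap, must lie in the subtorus corresponding to $A_{\Delta\cap\Delta'}$: the displacement function of $f$ is continuous and constant along each flat, so on the intersection the two translation vectors agree, which forces each of them into the common sub-directions. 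Since any two maximal tori are joined by a chain of maximal tori with nonempty consecutive intersections, and the intersection of \emph{all} maximal cliques is $\Cent(\G)$, an induction along such chains pins the translation vector of $f$ on every maximal torus to lie in the central direction $\T_{\Cent(\G)}$. Because the maximal tori cover $X$ (every point lies in at least one), $f$ agrees with a single global translation $L_\tau$ with $\tau\in\T_{\Cent(\G)}$, and hence $f\in\Isom_0(X)$.

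I expect the main obstacle to be the bookkeeping in the chain-of-tori induction: one must verify that the ``translation vector'' is a well-defined element of a fixed model torus independent of which maximal torus one computes in, and that passing through an intersection genuinely identifies the relevant sub-lattices compatibly. Concretely, the displacement-constancy argument shows the two local translation vectors restrict to the same element of the flat $T_\Delta\cap T_{\Delta'}$, but to conclude they both lie in the $\Cent(\G)$-direction I need to know that the only vector common to the span directions of $T_\Delta$ and all tori reachable from it is the central one—this is exactly where the graph-theoretic fact $\bigcap(\text{maximal cliques})=\Cent(\G)$ enters, together with connectivity of the maximal-torus intersection graph. A secondary subtlety is the very first step: ruling out a nontrivial linear (rotational) part of $f|_T$ on a maximal torus, which uses that $f$ commutes with the $\AG$-action and that a characteristic cycle direction is preserved; here Corollary \ref{cor:TDGeodesic} and Lemma \ref{lem:BCV3.9} identifying carriers inside minsets should supply the rigidity needed.
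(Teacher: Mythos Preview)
Your proposal is correct and follows essentially the same route as the paper: lift to an $\AG$-equivariant isometry, show $f$ restricts to a translation on each maximal torus, use the structure of pairwise intersections together with chain-connectivity of the maximal tori to force the translation into $\T_{\Cent(\G)}$, and conclude by coverage. The only refinement the paper supplies at the step you flagged as the main obstacle is Lemma~\ref{lem:ClosedOrbit}, which replaces your displacement-constancy heuristic with the cleaner statement that a translation $\tau$ of a torus preserving a closed connected subset $\mathbb{X}$ must satisfy $\pi_1(\T_\tau)\subseteq\mathrm{im}\,\pi_1(\mathbb{X})$; combined with $\pi_1(\T_{\Delta_1}\cap\T_{\Delta_2})=\pi_1(\T_{\Delta_1\cap\Delta_2})$ this immediately gives $\tau_i\in\T_{\Delta_1\cap\Delta_2}$.
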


This theorem will be proven in the following two sections. Here we deduce a strengthening of Corollary \ref{cor:main}, by considering the action of $\Isom(X)$ on $H_1(X)$. 

\begin{corollary}Let $f\colon X\rightarrow X$ be an isometry such that $f_*\colon H_1(X)\rightarrow H_1(X)$ is trivial. Then $f\in \Isom_0(X)$.  In particular, $\pi_0(\Isom(X))$ injects into $\Aut(H_1(X))$.
\end{corollary}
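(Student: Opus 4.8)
The plan is to deduce this corollary from Theorem~\ref{thm:IsoID} by showing that the action on $H_1(X)$ refines the action on $\Out(\AG)$. First I would recall that $X$ is a $K(\AG,1)$, so $H_1(X)\cong \AG^{\mathrm{ab}}\cong \Z^{|V|}$, the free abelian group on the vertices of $\G$. The homomorphism $\psi\colon \Isom(X)\to \Out(\AG)$ from the excerpt postcomposes with the natural map $\Out(\AG)\to \Aut(\AG^{\mathrm{ab}})=\GL_{|V|}(\Z)$ induced by abelianization; call the composite $\bar\psi\colon \Isom(X)\to \Aut(H_1(X))$. By naturality of the Hurewicz isomorphism, $\bar\psi(f)$ is exactly $f_*$ on $H_1(X)$. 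So the hypothesis that $f_*$ is trivial means $\bar\psi(f)=\id$.

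The key step is then to argue that if $f_*=\id$ on $H_1(X)$, then in fact $\psi(f)=1$ in $\Out(\AG)$, so that Theorem~\ref{thm:IsoID} applies and gives $f\in\Isom_0(X)$. This is not automatic, since $\Out(\AG)\to\GL_{|V|}(\Z)$ has nontrivial kernel in general (partial conjugations, for instance, act trivially on the abelianization). Here is where I would use that $\psi(f)$ lies in the \emph{finite} subgroup $\psi(\pi_0(\Isom(X)))$ of $\Out(\AG)$, by Proposition~\ref{prop:FiniteComps}. So it suffices to show that the abelianization map is injective on every finite subgroup of $\Out(\AG)$ --- equivalently, that the kernel of $\Out(\AG)\to\GL_{|V|}(\Z)$ is torsion-free. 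This is a known fact: the kernel is the ``Torelli subgroup'' of $\Out(\AG)$, sometimes denoted $IA_\Gamma$ or $\mathcal T_\Gamma$, and it is torsion-free. One clean way to see it is that $IA_\Gamma$ is residually torsion-free nilpotent, being detected by the lower central series quotients of $\AG$ on which it acts unipotently (each graded piece of the associated Lie algebra is a finitely generated free abelian group, and an element acting trivially on $\AG^{\mathrm{ab}}$ acts unipotently on all of them, hence has infinite order unless trivial); alternatively, one can cite that $\Out(\AG)$ is virtually torsion-free and the Torelli subgroup contains no torsion. Either way, a finite-order element of $\Out(\AG)$ that is trivial on $H_1$ is trivial, so $\psi(f)=1$.

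With $\psi(f)=1$ established, Theorem~\ref{thm:IsoID} gives $f\in\ker\psi=\Isom_0(X)$ immediately. For the final sentence, since $\Isom_0(X)\le\ker\bar\psi$ (elements isotopic to the identity act trivially on homology), $\bar\psi$ descends to an injection $\pi_0(\Isom(X))=\Isom(X)/\Isom_0(X)\hookrightarrow\Aut(H_1(X))=\GL_{|V|}(\Z)$: injectivity is exactly the statement just proved, that a path component acting trivially on $H_1(X)$ is the identity component. I expect the main obstacle to be the torsion-freeness of the Torelli subgroup of $\Out(\AG)$ --- everything else is naturality of Hurewicz and bookkeeping --- so in the write-up I would either give the short residual-nilpotence argument or pin down the cleanest citation; if the paper prefers to stay self-contained, the lower-central-series argument using that $\AG$ is residually torsion-free nilpotent (a standard property of RAAGs) is the most economical route.
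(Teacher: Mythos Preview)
Your proposal is correct and follows essentially the same route as the paper: factor the $H_1$-action through $\psi$, use finiteness of $\pi_0(\Isom(X))$ to reduce to showing the Torelli subgroup of $\Out(\AG)$ is torsion-free, and then conclude via Theorem~\ref{thm:IsoID}. The paper simply cites Day for the torsion-freeness of $\ker(\Out(\AG)\to\Aut(H_1(\AG)))$, which is exactly the fact you singled out as the main obstacle; your suggested self-contained argument via residual torsion-free nilpotence would need a bit of care to pass from $\Aut$ to $\Out$, so the citation is the cleaner choice.
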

\begin{proof}Let $\overl{\psi}\colon \Isom(X)\rightarrow \Aut(H_1(X))$ be the homomorphism induced by the action of $\Isom(X)$ on $H_1(X)$. Clearly, we have $\Isom_0(X)\leq \ker\overl\psi$.  On the other hand, $\overl\psi$ factors through $\psi$, hence by Theorem \ref{thm:IsoID}, the image of $\Isom(X)$ factors through the finite group  $\pi_0(\Isom(X))$. Since the kernel of $\Out(\AG)\rightarrow \Aut(H_1(\AG))$ is torsion-free by a result of Day \cite{Day}, $\pi_0(\Isom(X))$ injects into $\Aut(H_1(\AG))$. 
\end{proof}

\section{Maximal tori in blowups}\label{sec:MaxTori}
In this section we will assume the parallelotope structure on $\SP$ is rectilinear.  The blowup structure $\bPi$ induces a labeling on edges of $\SP$ by either generators or partitions, and distinguishes a CAT(0) subcomplex $\EP$ consisting of those cubes labeled only by partitions.  Recall that $\EP$ contains every vertex of $\SP$ and is contractible (being CAT(0)).

\begin{definition} Let $p\colon \USP\rightarrow \SP$ be the universal covering and let $\Delta\subseteq \G$ be a maximal clique.  The \emph{maximal torus} $\T_\Delta$ is the image under $p$ of the min set $\Min(A_\Delta)$.  
\end{definition}
That this min set is nonempty follows from the fact that $A_\Delta$ is free abelian and the flat torus theorem.

\begin{lemma}\label{lem:ExistMaxTorus}For any maximal clique $\Delta$, $\T_\Delta$ is isometric to a torus of dimension $|\Delta|$. Moreover, every cube of $\SP$ is contained in some maximal torus. 
\end{lemma}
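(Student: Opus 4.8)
The plan is to handle the two assertions separately; throughout write $d=|\Delta|$ and $A_\Delta=\langle v_1,\dots,v_d\rangle\cong\Z^d$. For the first assertion the crux is to show that $\Min(A_\Delta)\subseteq\USP$ is a single $d$-dimensional flat. Granting this, $\T_\Delta$ is a flat $d$-torus by standard arguments: $\Delta$ maximal forces $N_{\AG}(A_\Delta)=A_\Delta$, and a maximal abelian subgroup of $\AG$ is determined by its min flat, so distinct $\AG$-translates of $\Min(A_\Delta)$ are disjoint and $p$ restricts to a covering $\R^d\to\T_\Delta$ with deck group $A_\Delta$, whence $\T_\Delta\cong\R^d/A_\Delta$. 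To locate the flat I would apply the Flat Torus Theorem to $A_\Delta$: $\Min(A_\Delta)$ splits metrically as $Y\times\R^d$, with $A_\Delta$ acting trivially on $Y$ and as a cocompact translation lattice on $\R^d$. Standard facts about min sets of abelian special subgroups make $\Min(A_\Delta)$ a cube subcomplex whose $\R^d$-factor is a cube subcomplex $F$ on which each $v_i$ acts by a coordinate translation; since characteristic cycles remain locally geodesic in the rectilinear metric, the $i$-th coordinate circle of $F/A_\Delta$ may be taken to be $\chi_{v_i}$, so every edge of $F$ in the $i$-th direction is labelled either by $v_i$ or by a partition splitting $v_i$, and in each direction at least one edge carries the label $v_i$ itself.

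The key step is then that $Y=\{\mathrm{pt}\}$. Choose a vertex $\tilde x\in F$ at which, for every $i$, one of the two $i$-th-direction edges is an $e_{v_i}$-edge, with direction $\xi_i$ in $\lk(\tilde x,F)$ (which is the boundary of a $d$-dimensional cross-polytope). If $Y\neq\{\mathrm{pt}\}$, then since $\Min(A_\Delta)$ is locally the product $Y\times F$, the link $\lk(\tilde x,\Min(A_\Delta))$ is the spherical join $\lk(\tilde x,Y)*\lk(\tilde x,F)$; a vertex $\eta$ of the non-empty factor $\lk(\tilde x,Y)$, joined to the facet $\{\xi_1,\dots,\xi_d\}$, is a $d$-simplex of $\lk(\tilde x,\USP)$, i.e.\ a $(d{+}1)$-cube of $\USP$ at $\tilde x$ whose $d{+}1$ distinct edge labels are $B,v_1,\dots,v_d$, with $B$ the label at $\eta$. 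These labels are pairwise adjacent, so (since $B$ is adjacent to each $v_i$) $\max(B)$ commutes with each $v_i$, whence $\max(B)\subseteq\bigcap_i\st(v_i)=\Delta$ — the last equality because no vertex outside a maximal clique is adjacent to all of it. But $\max(B)$ is an independent set of $\G$ (it is a fold-equivalence class, or a single vertex) while $\Delta$ is a clique, so $\max(B)=\{v_k\}$ for a single $k$; then $B$ and the label $v_k$ share the maximal element $v_k$ and hence are \emph{not} adjacent, contradicting that both label a common cube. Therefore $Y$ is a point and $\Min(A_\Delta)=F\cong\R^d$. This is where I expect the real work to lie: rigorously setting up the cubical local product structure of $\Min(A_\Delta)$, and verifying that its $\R^d$-factor has the $v_i$ as coordinate directions carrying $e_{v_i}$-edges, means extracting these ``standard for special cube complexes'' facts from the characteristic-cycle machinery (Lemma \ref{lem:BCV3.9}, Corollary \ref{cor:TDGeodesic}) together with local geodesy of characteristic cycles; once that local picture is in place, the contradiction is the short commuting-generators computation above.

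For the second assertion, since every cube is a face of a maximal cube it suffices to place a maximal cube $C$ in some $\T_\Delta$. By Proposition \ref{prop:BlowupFacts}(iii) its edge labels $A_1,\dots,A_k$ form a maximal pairwise-adjacent family in $V\cup\bPi$. Distinct labels of a cube have disjoint maximal sets (otherwise a generator would be forced to commute with itself), so I may pick $w_i\in\max(A_i)$ pairwise distinct; by adjacency these commute, so $W=\{w_1,\dots,w_k\}$ is a clique, and in fact a \emph{maximal} one: a vertex $u\notin W$ adjacent to all the $w_i$ would satisfy $u\in\lk(w_i)=\lk(w_i')$ for every $w_i'\in\max(A_i)$, hence be adjacent to all the $A_i$ and enlarge the maximal family — impossible. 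Finally, fix a lift $\tilde C=g\tilde C_0\subseteq\USP$ of $C$ over a standard cube $\tilde C_0$; each edge of $\tilde C_0$ lies on a standard axis of the corresponding $w_i$ (Lemma \ref{lem:BCV3.9}(ii)), so $\tilde C_0\subseteq\bigcap_i\Min(w_i)=\Min(A_W)$, hence $\tilde C\subseteq g\,\Min(A_W)=\Min(gA_Wg^{-1})$, and projecting gives $C\subseteq p(\Min(A_W))=\T_W$, a maximal torus by the first part.
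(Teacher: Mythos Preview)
Your argument is correct and follows the same strategy as the paper: apply the Flat Torus Theorem to write $\Min(A_\Delta)\cong Y\times\R^d$ and then use maximality to force $Y$ to be a point, and for the second assertion choose $m_i\in\max(A_i)$ for a maximal cube and show $\{m_i\}$ is a maximal clique. The paper's version is shorter because it cites \cite[Corollary~3.7]{BCV} to produce directly the unique (necessarily maximal) cube $C$ with edge labels $v_1,\dots,v_d$ and observes that a lift lies in $\Min(A_\Delta)$, so the dimension of $Y\times\R^d$ cannot exceed $d$; this single dimension count replaces your link-and-adjacency contradiction, and the paper also suppresses the normalizer/stabilizer discussion you supply for why $\T_\Delta$ is genuinely a $d$-torus rather than a further quotient.
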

\begin{proof} Suppose $\Delta=\{v_1,\cdots,v_k\}$  is a maximal clique and let $C$ be the unique cube in $\SP$ whose edge labels are $\{v_1,\ldots, v_k\}$, whose existence is guaranteed by Corollary 3.7 of \cite{BCV}. By Lemma \ref{lem:BCV3.9}, $C$ lies in the image of the min set of each of the $v_i$.  Since the $v_i$ commute, there exists a lift $\widetilde{C}$ in $\Min(A_\Delta)$. On the one hand, since $A_\Delta$ is free abelian, $\Min(A_\Delta)$ is isometric to $Y\times \R^k$ for some convex set $Y$. On the other hand, since $\widetilde{C}\subset \Min(A_\Delta)$  and $C$ is a maximal cube of $\SP$, this implies $Y$ must be a point, hence $\Min(A_\Delta)\cong \R^k$. Hence $\T_\Delta$ is a torus.  To see that every point $\SP$ lies in such a torus, consider any maximal cube $C$ with edge labels $A_1,\ldots, A_k$ and choose $m_i\in \max(A_i)$ for each $i$.  Then by Lemma \ref{lem:BCV3.9}, each edge of $D$ lies in the image of an axis for some $m_i$.  Arguing as above, we see that $C\subset \T_\Delta$ where $\Delta=\{m_1,\ldots, m_k\}$. 
\end{proof}

Since maximal tori cover $\SP$, it will be useful to understand their intersections. As a first step, we study the intersection of a maximal torus with $\EP$.

\begin{lemma}\label{lem:ConvexIntersection}Each maximal torus intersects $\EP$ in a convex subset.  
\end{lemma}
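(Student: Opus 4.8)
The plan is to work in the universal cover $\USP$, where both $\EP$ and the maximal torus lift to genuinely convex (CAT(0)) subcomplexes, establish convexity of the intersection there, and then push the statement down to $\SP$. Let $\Delta = \{v_1,\dots,v_k\}$ be a maximal clique, so that $\T_\Delta = p(\Min(A_\Delta))$ and, by the proof of Lemma \ref{lem:ExistMaxTorus}, $\Min(A_\Delta) \cong \R^k$ is a single $k$-flat. Let $\widetilde\EP = p^{-1}(\EP)$ be the full preimage of the partition-subcomplex; by Proposition \ref{prop:BlowupFacts}(ii), $\EP$ is locally convex in $\SP$, so each component of $\widetilde\EP$ is convex in the CAT(0) space $\USP$. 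The first step is to reduce to showing that $F \cap \widetilde\EP$ is convex (equivalently, connected-and-convex) for a suitable flat $F$ in the $A_\Delta$-orbit of $\Min(A_\Delta)$: since $\T_\Delta \cap \EP = p(\Min(A_\Delta) \cap \widetilde\EP)$ and $p$ restricted to a convex fundamental domain is an isometry onto its image, convexity downstairs follows once we know that $\Min(A_\Delta) \cap \widetilde\EP$ meets each $A_\Delta$-translate-class in a convex set and that these do not ``wrap around'' — this is where the rectilinear hypothesis on the parallelotope structure, in force throughout \S\ref{sec:MaxTori}, will be used to control the local picture.

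The heart of the argument is a local-to-global statement for the intersection of the flat $F = \Min(A_\Delta)$ with the convex subcomplex $\widetilde\EP$. Both are convex subsets of the CAT(0) space $\USP$, so their intersection is automatically convex in $\USP$; the only real content is that $\Min(A_\Delta) \cap \widetilde\EP$ is \emph{nonempty and connected}, and that it descends isometrically. For connectedness and nonemptiness: by Lemma \ref{lem:ExistMaxTorus} the unique cube $C$ with edge labels $\Delta$ lifts to $\widetilde C \subset \Min(A_\Delta)$, and its vertices lie in $\widetilde\EP$ (every vertex of $\SP$ lies in $\EP$). I would then describe $\Min(A_\Delta) \cap \widetilde\EP$ combinatorially: a vertex $\widetilde x \in F$ lies in $\widetilde\EP$ automatically (all vertices do), and more generally a point of $F$ lies in $\widetilde\EP$ iff, in the local product structure $F \cong \R^k$ near that point, the coordinate directions that are "switched on" are exactly the partition-labelled ones. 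Using Lemma \ref{lem:BCV3.9}(ii) — the carrier $\kappa(H_A)$ sits inside the image of $\Min(v)$ for $v \in \max(A)$ — one sees that the cells of $\SP$ lying in $\T_\Delta$ are precisely those whose edge labels $A$ satisfy $\max(A) \subseteq \Delta$ (up to fold-equivalence), and among these the ones also lying in $\EP$ are exactly the partition-labelled ones. So $\T_\Delta \cap \EP$ is the union of those parallelotopes of $\T_\Delta$ all of whose edge labels are partitions $\WP$ with $\max(\WP)$ fold-equivalent into $\Delta$; one checks directly that this subcomplex of the torus $\T_\Delta$ is convex (it is a ``coordinate sub-box'' inside the flat torus, because the relevant labels form an intersection-closed family under the adjacency/compatibility relations).

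The step I expect to be the main obstacle is ruling out that the intersection, while locally convex in each parallelotope, could fail to be globally convex inside the torus $\T_\Delta$ by "going around" a nontrivial loop — i.e., the descent from the convex slab in $F = \R^k$ to a convex (rather than merely locally convex) subset of $\T_\Delta$. The resolution should be that the relevant subcomplex of $\T_\Delta$ corresponds, under the isometry $\T_\Delta \cong \R^k/\Z^k$ with coordinates indexed by $\Delta$, to the subtorus-with-boundary spanned by a \emph{subset} of the coordinate directions (those $v_i \in \Delta$ that happen to be split by some partition in $\bPi$, together with the partition directions), which is a genuine flat sub-torus and hence convex; the key input making this a clean coordinate subset is Proposition \ref{prop:BlowupFacts}(iii) (uniqueness of maximal cubes) together with the compatibility structure of $\bPi$, which guarantees that the partition-labelled edges meeting $\T_\Delta$ all lie in a single maximal orthotope direction-set and therefore cut out a product region. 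Once the intersection is identified with such a coordinate sub-torus, convexity is immediate and the lemma follows.
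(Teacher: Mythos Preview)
Your approach has a genuine gap in the final identification. You conclude that $\T_\Delta \cap \EP$ is ``a genuine flat sub-torus and hence convex,'' but a positive-dimensional sub-torus of a flat torus is \emph{never} convex (any geodesic in the sub-torus direction wraps around). Earlier you had the right picture when you called it a ``coordinate sub-box,'' but you then abandon this for the sub-torus description, which is incorrect. The intersection is not a sub-torus: $\EP$ is CAT(0), hence contractible, so $\T_\Delta \cap \EP$ cannot carry any nontrivial loop.

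The missing structural observation is the one the paper uses directly (and which makes the universal-cover detour unnecessary). In the rectilinear metric, the torus decomposes as an orthogonal product $\T_\Delta \cong \beta_1 \times \cdots \times \beta_k$, where $\beta_i$ is the characteristic cycle for $v_i$ through a fixed vertex $x_0$. Each $\beta_i$ contains exactly one edge labelled by the generator $v_i$; all its other edges are labelled by partitions splitting $v_i$. Hence $\gamma_i := \beta_i \setminus \Int(e_{v_i})$ is a geodesic arc lying in $\EP$, and the product structure gives $\T_\Delta \cap \EP = \gamma_1 \times \cdots \times \gamma_k$, a product of geodesic segments, i.e.\ a convex box. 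Your proposal never isolates the fact that each coordinate circle meets $\EP$ in a single arc, which is exactly what rules out the wrapping you were worried about and what makes the convexity immediate without passing to $\USP$.
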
 
\begin{proof}Suppose $\Delta=\{v_1,\ldots, v_k\}$.  From the proof of Lemma \ref{lem:ExistMaxTorus}, $\T_\Delta$ contains the unique cube labeled by $\{v_1,\ldots, v_k\}$.  Choose some vertex $x_0$ in the boundary of this cube as a basepoint and let $\beta_i$ be the image of the axis for $v_i$ passing through $x_0$.  Since $\SP$ is orthogonal, $\T_\Delta\cong \beta_1\times\cdots\times \beta_k$. Now $\beta_i$ contains the edge $e_{v_i}$ labeled by $v_i$ incident at $x_0$.  It follows that if $\gamma_i=\beta_i\setminus \Int(e_{v_i})$, then $\gamma_i\subset \EP$ and from the product structure above we see that $\T_\Delta\cap \EP\cong \gamma_1\times\cdots\times \gamma_k$. This is clearly convex, as it is a product of geodesic segments. 
\end{proof}

Now we consider the intersection of two maximal tori, and show that up to homotopy, they intersect in a torus of smaller dimension, corresponding to the intersection of their maximal cliques. 

\begin{lemma}\label{lem:2ToriMeet}Let $\Delta_1$ and $\Delta_2$ be two maximal cliques such that $\T_{\Delta_1}\cap \T_{\Delta_2}$ contains a vertex.  Then $\T_{\Delta_1}\cap \T_{\Delta_2}=\K\times \T_{\Delta}$ where $\K$ is contractible and $\Delta=\Delta_1\cap\Delta_2$. 
\end{lemma}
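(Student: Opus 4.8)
The plan is to produce the splitting from two compatible metric product decompositions of the tori, obtained after lifting to the universal cover $\USP$; the $\T_\Delta$–factor will come from the $\Delta$–directions and the contractible factor $\K$ from the remaining ``link'' directions. First I would choose a common lift of the given vertex $v_0\in\T_{\Delta_1}\cap\T_{\Delta_2}$. Pick $\widetilde v_0$ over $v_0$ lying in $F_1:=\Min(A_{\Delta_1})$ (possible since $v_0\in\T_{\Delta_1}=p(\Min(A_{\Delta_1}))$); since the translates $g\,\Min(A_{\Delta_2})$, one per coset $gA_{\Delta_2}$, are pairwise disjoint and cover $p^{-1}(\T_{\Delta_2})$, the point $\widetilde v_0$ lies in exactly one of them, call it $F_2$, with $p(F_2)=\T_{\Delta_2}$. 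Thus $F_1,F_2$ are flats of dimensions $|\Delta_1|,|\Delta_2|$ both passing through $\widetilde v_0$.

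Next I would extract the common factor. For each $u\in\Delta:=\Delta_1\cap\Delta_2$ we have $\widetilde v_0\in\Min(u)\cong Y_u\times\R$, so there is a unique axis $\alpha_u$ for $u$ through $\widetilde v_0$—the $\R$–fibre through $\widetilde v_0$—and a short convexity argument shows $\alpha_u$ is an orthogonal product factor of both $F_1$ and $F_2$. Hence $\widetilde\T_\Delta:=\prod_{u\in\Delta}\alpha_u$ is a common $|\Delta|$–dimensional sub-flat. Moreover $F_1,F_2\subseteq\Min(A_\Delta)$, and by the flat torus theorem $\Min(A_\Delta)$ splits as a metric product $Z\times\widetilde\T_\Delta$ with $Z$ convex; the centraliser of $A_\Delta$ preserves this splitting and acts trivially on $\widetilde\T_\Delta$, because—using that the parallelotope structure is rectilinear—any generator outside $\Delta$ that commutes with all of $\Delta$ spans a rectilinear cube with $\Delta$ and hence has its axis orthogonal to $\widetilde\T_\Delta$. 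Therefore $F_i=Z_i\times\widetilde\T_\Delta$ for a flat $Z_i\subseteq Z$, and
\[
F_1\cap F_2=(Z_1\cap Z_2)\times\widetilde\T_\Delta ,
\]
with $Z_1\cap Z_2$ a non-empty convex—hence contractible—subset of the CAT(0) space $Z$.

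Then I would descend to $\SP$. The deck group $A_{\Delta_1}=A_\Delta\times A_{\Delta_1\setminus\Delta}$ of $p|_{F_1}$ acts with $A_\Delta$ only on the $\widetilde\T_\Delta$–factor—quotient the torus $\T_\Delta:=\widetilde\T_\Delta/A_\Delta$ of dimension $|\Delta|$—and $A_{\Delta_1\setminus\Delta}$ only on $Z_1$, cocompactly. Identifying the preimage $(p|_{F_1})^{-1}(\T_{\Delta_1}\cap\T_{\Delta_2})$ with the $A_{\Delta_1\setminus\Delta}$–orbit of $F_1\cap F_2$ then yields
\[
\T_{\Delta_1}\cap\T_{\Delta_2}=\K\times\T_\Delta,\qquad
\K:=\bigl(\bigcup_{a\in A_{\Delta_1\setminus\Delta}}a(Z_1\cap Z_2)\bigr)/A_{\Delta_1\setminus\Delta}.
\]
To see $\K$ is contractible: it is locally convex in $\SP$ (a locally convex slice of the locally convex intersection $\T_{\Delta_1}\cap\T_{\Delta_2}$), hence aspherical; and $\pi_1\K=1$, since $\pi_1(\T_{\Delta_1}\cap\T_{\Delta_2})$ injects into $A_{\Delta_1}\cap A_{\Delta_2}=A_\Delta$ (local convexity makes the inclusions into the two tori $\pi_1$–injective) while the $\T_\Delta$–factor already realises all of $A_\Delta$; an aspherical simply connected space is contractible.

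The hard part, I expect, is the connectedness and asphericity inputs to the last step—i.e.\ showing $\T_{\Delta_1}\cap\T_{\Delta_2}$ is connected and that $\bigcup_a a(Z_1\cap Z_2)$ does not fall into several components—together with the bookkeeping that makes the two product structures ``aligned along $\T_\Delta$'': choosing the right conjugate $F_2$ so that the sub-flat $\widetilde\T_\Delta$ genuinely lies in it (this is where one uses that $\widetilde v_0$ is a \emph{vertex}), and ruling out that translates of $\Min(A_{\Delta_2})$ outside the $A_{\Delta_1\setminus\Delta}$–orbit contribute to the preimage. For connectedness I would argue cubically, via Lemma~\ref{lem:ExistMaxTorus} and Lemma~\ref{lem:ConvexIntersection}: every cube of $\T_{\Delta_1}\cap\T_{\Delta_2}$ has each edge-label $A$ with $\max(A)$ meeting both $\Delta_1$ and $\Delta_2$, and since all elements of $\max(A)$ are fold-equivalent and, by Lemma~\ref{lem:TDMax}, cannot be twist-dominant unless $\max(A)$ is a singleton, such a cube can be joined through the connected subcomplex $\EP$ down to the core torus $\T_\Delta$.
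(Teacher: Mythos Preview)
Your upstairs analysis is sound through the splitting $F_1\cap F_2=(Z_1\cap Z_2)\times\widetilde\T_\Delta$, but the descent step contains a genuine gap that you yourself flag and do not close. Identifying $(p|_{F_1})^{-1}(\T_{\Delta_1}\cap\T_{\Delta_2})$ with the $A_{\Delta_1\setminus\Delta}$--orbit of $F_1\cap F_2$ amounts to showing that every translate $g\cdot\Min(A_{\Delta_2})$ which meets $F_1$ has $g\in A_{\Delta_1}A_{\Delta_2}$; nothing in your argument forces this, and without it your formula for $\K$ is not justified, nor is connectedness of the intersection. Your proposed fix for connectedness already appeals to the subcomplex $\EP$, which is exactly the device that lets the paper avoid the whole translate-counting problem in the first place.

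The paper's route is shorter and stays downstairs. Since the metric is rectilinear, each $\T_{\Delta_i}$ splits orthogonally as $\T_\Delta\times\T_{\Delta_i^\perp}$, so the intersection is foliated by $\T_\Delta$--slices and one writes $\T_{\Delta_1}\cap\T_{\Delta_2}=\K\times\T_\Delta$ directly. For contractibility of $\K$ the paper does not argue via $\pi_1$ and asphericity; instead it shows that the copy of $\K$ through the given vertex $x_0$ meets only cubes whose labels are partitions (no generator labels survive, since labels from $\Delta_1^\perp\cup\Delta_2^\perp$ are excluded by the intersection and labels from $\Delta$ are excluded by taking the orthogonal slice), hence $\K\subseteq\EP$. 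Combined with Lemma~\ref{lem:ConvexIntersection}, $\K$ is then convex in the CAT(0) complex $\EP$ and therefore contractible. This sidesteps entirely the questions of which translates of $\Min(A_{\Delta_2})$ meet $F_1$ and whether the resulting union is connected.
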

\begin{proof}Let $\Delta=\Delta_1\cap \Delta_2$ and set $\Delta_i^\perp=\Delta_i\setminus \Delta$ for $i=1,2$.  For any point $p$ in either $\T_{\Delta_1}$ or $\T_{\Delta_2}$, then there is a unique subtorus $\T_\Delta$ spanned by generators in $\Delta$ containing $p$.  It follows that a translate of $\T_{\Delta}$ passes through every point of  $\T_{\Delta_1}\cap \T_{\Delta_2}$.   Since $\T_{\Delta_i}$ decomposes as an orthogonal product $\T_{\Delta_i}=\T_\Delta\times \T_{{\Delta_i}^\perp}$ for $i=1,2$, we conclude that $\T_{\Delta_1}\cap \T_{\Delta_2}$ decomposes as $\K\times \T_{\Delta}$. Let $x_0\in \T_{\Delta_1}\cap\T_{\Delta_2}$ be a vertex, which exists by assumption. By Lemma \ref{lem:ConvexIntersection},  $\T_{\Delta_1}\cap \EP$ and $\T_{\Delta_2}\cap \EP$ are both convex.  Since $\EP$ contains all the vertices of $\SP$, we see that $ \T_{\Delta_1}\cap\T_{\Delta_2}\cap\EP$ is nonempty and therefore convex in $\EP$. 

To finish the proof it suffices to show that $\K$ is parallel to a subspace of $\EP$. Observe that $\T_{\Delta_i}$ only meets the interiors of cubes with labels in $\Delta_i$ or one of the partitions that split $v\in \Delta_i$, $i=1,2$. In particular, since $\Delta_1\cap \Delta_2=\Delta$, this means that $\T_{\Delta_1}\cap \T_{\Delta_2}$ does not meet the interior of any cube with some label in $\Delta_1^\perp\cup \Delta_2^\perp$.  Recall that the partitions which split distinct generators of $\Delta_i$ are distinct and pairwise linked.  By orthogonality, the copy of $\K$ passing through the common vertex $x_0$ does not meet any cubes with labels in partitions that split $v\in \Delta$.  Therefore it must only meet cubes labeled by partitions, i.e. $\K\subseteq \EP$, as desired.
\end{proof}

\begin{corollary}\label{cor:FullIntersection}Let $\Delta_1$ and $\Delta_2$ be two maximal cliques such that $\T_{\Delta_1}\cap \T_{\Delta_2}$ contains a vertex. Then $\T_{\Delta_1}\cap \T_{\Delta_2}$ is equal to the image in $\SP$ of $\Min(A_{\Delta_1})\cap \Min(A_{\Delta_2})$.
\end{corollary}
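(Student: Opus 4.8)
The inclusion $p\bigl(\Min(A_{\Delta_1})\cap\Min(A_{\Delta_2})\bigr)\subseteq\T_{\Delta_1}\cap\T_{\Delta_2}$ is immediate from $\T_{\Delta_i}=p(\Min(A_{\Delta_i}))$, so the plan is to prove the reverse inclusion. The engine will be path lifting, combined with the connectedness of $\T_{\Delta_1}\cap\T_{\Delta_2}$ established in Lemma~\ref{lem:2ToriMeet}. First I would record that $p$ restricts to a covering map on each $\Min(A_{\Delta_i})$: by Lemma~\ref{lem:ExistMaxTorus} the subcomplex $\T_{\Delta_i}$ is locally convex and isometric to a flat $|\Delta_i|$-torus, so $p^{-1}(\T_{\Delta_i})$ is locally convex in $\USP$ and the flat $\Min(A_{\Delta_i})\cong\R^{|\Delta_i|}$ is one of its connected components (distinct $\AG$-translates of this flat are disjoint because $A_{\Delta_i}$ is self-normalizing, $\Delta_i$ being a maximal clique); hence $q_i:=p|_{\Min(A_{\Delta_i})}\colon\Min(A_{\Delta_i})\to\T_{\Delta_i}$ is a (universal) covering of the torus.

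Granting for now that the vertex $x_0\in\T_{\Delta_1}\cap\T_{\Delta_2}$ provided by hypothesis has a lift $\tilde x_0$ lying simultaneously in $\Min(A_{\Delta_1})$ and $\Min(A_{\Delta_2})$, the corollary follows quickly. Let $y\in\T_{\Delta_1}\cap\T_{\Delta_2}$ be arbitrary; by Lemma~\ref{lem:2ToriMeet} the set $\T_{\Delta_1}\cap\T_{\Delta_2}=\K\times\T_\Delta$ is connected, so pick a path $\gamma$ from $x_0$ to $y$ inside it. Lifting $\gamma$ through $q_1$ and composing with $\Min(A_{\Delta_1})\hookrightarrow\USP$ gives a lift $\tilde\gamma$ of $\gamma$ to $\USP$ starting at $\tilde x_0$; doing the same through $q_2$ gives a lift of $\gamma$ to $\USP$ starting at $\tilde x_0$, and by uniqueness of path lifting to the universal cover the two coincide. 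Therefore $\tilde\gamma$ lies in $\Min(A_{\Delta_1})\cap\Min(A_{\Delta_2})$, its endpoint is a lift of $y$ in that intersection, and so $y\in p\bigl(\Min(A_{\Delta_1})\cap\Min(A_{\Delta_2})\bigr)$.

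The real content, and the step I expect to be the main obstacle, is producing the common lift $\tilde x_0$. Since $x_0$ is a vertex it lies in $\EP$, and by Lemmas~\ref{lem:ConvexIntersection} and~\ref{lem:2ToriMeet} the intersection $\T_{\Delta_1}\cap\T_{\Delta_2}\cap\EP$ is nonempty, convex, and contains $x_0$. As $\EP$ is CAT(0) (Proposition~\ref{prop:BlowupFacts}(ii)) it is simply connected, so $p$ carries each connected component (``sheet'') of $p^{-1}(\EP)$ homeomorphically onto $\EP$; fix the sheet $\widetilde\EP_\ast$ compatible with the chosen identification $\pi_1(\SP)\cong\AG$. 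The key claim to verify is that for each maximal clique $\Delta$ the flat $\Min(A_{\Delta})$ meets $\widetilde\EP_\ast$, and does so precisely in the lift of $\T_\Delta\cap\EP$ sitting inside $\widetilde\EP_\ast$. I would prove this by pushing forward along the standard collapse $c_\pi\colon\SP\to\SaG$, which collapses $\EP$ to the base vertex of $\SaG$: in the universal cover of $\SaG$ every standard special flat passes through the base vertex, and pulling this back identifies the component of $p^{-1}(\T_\Delta)$ meeting $\widetilde\EP_\ast$ with $\Min(A_\Delta)$. Granting the claim, $\Min(A_{\Delta_1})$ and $\Min(A_{\Delta_2})$ both meet $\widetilde\EP_\ast$ in lifts of $\T_{\Delta_1}\cap\EP$ and $\T_{\Delta_2}\cap\EP$ respectively, hence both contain the unique lift of $x_0$ that lies in $\widetilde\EP_\ast$; this is the desired $\tilde x_0$.
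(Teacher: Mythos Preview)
Your argument and the paper's follow the same covering-space lifting strategy, but you do considerably more work than the paper's two-line proof. The paper disposes of the reverse inclusion in one sentence: since $\K$ is contractible, there is a lift $\K\times\widetilde{\T_\Delta}\to\Min(A_{\Delta_1})\cap\Min(A_{\Delta_2})$, and this lift is surjective. You correctly recognize that the real content behind that sentence is the existence of a point of $\Min(A_{\Delta_1})\cap\Min(A_{\Delta_2})$ lying over the given vertex $x_0$, and you supply a route to it via the preferred sheet $\widetilde\EP_\ast\subset p^{-1}(\EP)$ and the standard collapse to $\SaG$. Once that common lift $\tilde x_0$ is in hand, your path-lifting step and the paper's map-lifting step are equivalent formulations of the same fact. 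In effect you have unpacked what the paper leaves implicit; the paper treats the existence of the lift into the intersection of min sets as immediate from the product decomposition of Lemma~\ref{lem:2ToriMeet}, whereas your $\EP$-sheet argument makes explicit why the two min sets can be taken to share a basepoint.
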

\begin{proof}Let $p\colon \USP\rightarrow \SP$ be the universal covering. The inclusion $p(\Min(A_{\Delta_1})\cap \Min(A_{\Delta_2}))\subseteq \T_{\Delta_1}\cap \T_{\Delta_2}$ is clear.  For the converse, since $\K$ is contractible, we have a lift $\K\times \widetilde{\T_\Delta}\rightarrow \Min(A_{\Delta_1})\cap \Min(A_{\Delta_2})$ which is necessarily surjective. 
\end{proof}

The above corollary allows us to define the intersection of two maximal tori purely in terms of CAT(0) geometry and group action.  This will be important in the next section where we consider more general parallelotope structures.  To finish this section, we show that any pair of maximal tori can be connected by a chain of pairwise intersecting maximal tori as in Lemma \ref{lem:2ToriMeet}.

\begin{lemma}Any two maximal tori $\T_1,\T_2$ can be connected by a chain of maximal tori $\T_1=\mbb{M}_0,~ \mbb{M}_2,~\ldots,~ \mbb{M}_{n+1}=\T_2$, such that for  $0\leq i\leq n$, $\mbb{M}_i\cap\mbb{M}_{i+1}$ contains a vertex.  

\end{lemma}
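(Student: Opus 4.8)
The plan is to use the fact (from Lemma~\ref{lem:ExistMaxTorus}) that the maximal tori cover $\SP$, together with the connectivity of $\SP$, to build the desired chain by a connectedness argument. Concretely, fix a base vertex $x_0$ in $\T_1$ (if $\T_1$ has no vertex, replace it with a maximal torus through some vertex of $\T_1$; every torus contains a vertex of $\SP$ since $\EP$ does and $\T_\Delta \cap \EP \neq \emptyset$). Let $S$ be the set of all maximal tori that can be connected to $\T_1$ by a finite chain $\T_1 = \mbb{M}_0, \mbb{M}_1, \ldots, \mbb{M}_k$ with each consecutive pair sharing a vertex. I want to show $S$ contains every maximal torus; it then follows that $\T_2 \in S$, which is the claim.

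The key step is to show that the union $U = \bigcup_{\mbb{M} \in S} \mbb{M}$ is both open and closed in $\SP$ — more precisely, that it is a union of closed cubes and that no cube outside $U$ shares a vertex with a cube inside $U$. First, $U$ is a subcomplex: if $\mbb{M} \in S$ and $\mbb{M} = \T_\Delta$, then every cube of $\mbb{M}$ lies in $\mbb{M}$, so $U$ is a union of closed cubes. Next I claim $U$ is closed under the relation ``shares a vertex'': suppose $C$ is any cube of $\SP$ meeting $U$ in a vertex $y$. By Lemma~\ref{lem:ExistMaxTorus}, $C \subseteq \T_{\Delta'}$ for some maximal clique $\Delta'$, and $y \in \T_{\Delta'}$. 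Since $y \in U$, there is some $\mbb{M} = \T_\Delta \in S$ with $y \in \mbb{M}$. Then $\T_{\Delta'} \cap \mbb{M}$ contains the vertex $y$, so by appending $\T_{\Delta'}$ to a chain from $\T_1$ to $\mbb{M}$ we get $\T_{\Delta'} \in S$, hence $C \subseteq \T_{\Delta'} \subseteq U$. Thus $U$ is a subcomplex that contains the closed star (in the cubical sense) of each of its vertices. Since $\SP$ is connected, any such subcomplex is all of $\SP$: indeed, starting from any vertex of $U$ one can reach any other vertex along a sequence of edges, and at each step the edge and its far endpoint lie in the star of a vertex already in $U$, hence in $U$; so all vertices, and therefore all cubes, of $\SP$ lie in $U$.

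Therefore every maximal torus, in particular $\T_2$, lies in $S$, which gives the chain $\T_1 = \mbb{M}_0, \ldots, \mbb{M}_{n+1} = \T_2$ with consecutive tori sharing a vertex, as required. The only mild subtlety — the ``main obstacle'' such as it is — is the bookkeeping at the endpoints: one must make sure that $\T_1$ and $\T_2$ themselves contain vertices of $\SP$ so that they can serve as the ends of a vertex-sharing chain. This is immediate from Lemma~\ref{lem:ConvexIntersection} together with the fact that $\EP$ contains every vertex of $\SP$: $\T_{\Delta} \cap \EP$ is a nonempty (convex) subcomplex, and being a subcomplex of $\EP$ it contains a vertex. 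With that in hand the argument above goes through verbatim. \QED
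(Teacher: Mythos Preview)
Your proof is correct, but it takes a different route from the paper's. The paper gives a direct construction: it picks a vertex in each of $\T_1$ and $\T_2$ (using that each maximal torus contains the unique cube labeled by its clique), connects them by a minimal edge-path $\gamma = e_1\cdots e_n$ inside the contractible subcomplex $\EP$, and then invokes Lemma~\ref{lem:ExistMaxTorus} to put each edge $e_i$ inside some maximal torus $\mbb{M}_i$; consecutive $\mbb{M}_i$ then share the vertex of $\gamma$ between them. Your argument is instead an abstract open--closed/connectedness argument over all of $\SP$: you let $S$ be the set of tori reachable by such chains, show the union $U$ absorbs the closed star of each of its vertices, and conclude $U=\SP$ by edge-connectedness. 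Both arguments ultimately rest on the same two facts---maximal tori cover $\SP$ (Lemma~\ref{lem:ExistMaxTorus}) and the $1$-skeleton is connected---so neither is deeper than the other. The paper's version is shorter and produces an explicit chain; yours makes no special use of $\EP$ and would go through in any setting where the covering-by-tori statement holds. One small cleanup: the aside ``if $\T_1$ has no vertex'' and the appeal to Lemma~\ref{lem:ConvexIntersection} at the end are unnecessary, since the proof of Lemma~\ref{lem:ExistMaxTorus} already exhibits a maximal cube (hence vertices) inside every maximal torus.
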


\begin{proof}
By Lemma \ref{lem:ExistMaxTorus}, $\T_1$ contains a maximal cube $C_1$ labeled by the generators in a maximal clique $\Delta_1$, and $\T_2$ contains a cube $C_2$ labeled by generators in a maximal clique $\Delta_2$.  Connect these cubes by a minimal length edgepath $\gamma=e_1\ldots e_n$ in $\EP$.  Applying Lemma \ref{lem:ExistMaxTorus} again, each $e_i$ is contained in a maximal torus $\mbb{M}_i$. Now set $\T_1=\mbb{M}_0$ and $\T_2=\mbb{M}_{n+1}$. Thus, for all $0\leq i\leq n$, we have $\mbb{M}_i\cap\mbb{M}_{i+1}$ contains a vertex along the path $\gamma$. 
\end{proof}

\section{Proof of the main theorem}\label{sec:ProofofMain}%

In this section we prove Theorem \ref{thm:IsoID}. First we must translate the results of the previous section on maximal tori to the setting of skewed $\G$-complexes.  This is possible because of characterization of maximal tori and their intersections in terms of min sets.

\begin{lemma}\label{lem:MinPreserve}Let $(\SP, \F)$ be a skewed metric blowup and let $\sigma\colon(\SP,\F)\rightarrow (\SP,\E)$ be the straightening map. For any maximal clique $\Delta\subseteq \Gamma $, the straightening map $\sigma$ takes the maximal torus for $\Delta$ in $(\SP,\F)$ to a maximal torus for $\Delta$ in $(\SP,\E)$.
\end{lemma}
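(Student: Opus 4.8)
The plan is to reduce the statement to an assertion about min sets in the universal cover $\USP$, where the combinatorics of $\bPi$ and $\Delta$, rather than the particular allowable metric, controls the min set of the maximal abelian special subgroup $A_\Delta$. First I would record the features of the straightening map $\sigma$ established in Section 5 of \cite{BCV}: it is combinatorially the identity on $\SP$ (it carries each cube to itself cellularly), it preserves the length of every edge $e_A$ for $A\in V\cup\bPi$, and consequently it carries each characteristic cycle to a characteristic cycle, which remains a local geodesic in $(\SP,\E)$ by the remark following Definition \ref{def:Allowable}. Being combinatorially trivial, $\sigma$ lifts to an $\AG$-equivariant homeomorphism $\tilde\sigma\colon\USP\to\USP$ conjugating the deck action to itself. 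Since $\T_\Delta=p(\Min(A_\Delta))$ in either metric and $\sigma$ commutes with $p$, it suffices to prove $\tilde\sigma(\Min_\F(A_\Delta))=\Min_\E(A_\Delta)$.

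Next I would show that for any allowable metric on $\SP$, the min set $\Min(A_\Delta)$ is carried by a fixed subcomplex $Z_\Delta$ of $\USP$ depending only on $\bPi$ and $\Delta$: namely, the lifts of those maximal cubes $C$ for which one can choose $m_A\in\max(A)$ for every edge label $A$ of $C$ with $\{m_A\}_A\subseteq\Delta$, together with their faces. The argument is exactly that of Lemma \ref{lem:ExistMaxTorus}, which uses only the flat torus theorem, the existence (Corollary 3.7 of \cite{BCV}) of a maximal cube with label set $\Delta$, and Lemma \ref{lem:BCV3.9}(ii); the latter is valid in any allowable metric since characteristic cycles stay geodesic. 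The reasoning in the proof of Lemma \ref{lem:2ToriMeet} shows in addition that $\Min(A_\Delta)$ meets no cube outside $Z_\Delta$. On $Z_\Delta$ every edge label $A$ satisfies $\max(A)\subseteq\Delta$, so the only nonzero off-diagonal shearing entries permitted by the allowable condition — and the only ones altered by $\sigma$ — lie in twist-comparable directions among the generators of $\Delta$. I would then check that the cube-by-cube affine un-shearing maps defining $\sigma$ on $Z_\Delta$ agree on shared faces, being determined there by the common edge lengths and the twist ordering, so they assemble into an affine bijection of the flat $\Min_\F(A_\Delta)\cong\R^{|\Delta|}$ onto a flat in $(\USP,\tilde\E)$. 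This bijection is $A_\Delta$-equivariant and, as a restriction of $\tilde\sigma$, sends the axis of each $v\in\Delta$ to its axis in $\E$; hence its image is an $|\Delta|$-dimensional flat invariant under $A_\Delta$ and containing the axes of a generating set, so it equals $\Min_\E(A_\Delta)$. Projecting by $p$ gives the lemma, and by Lemma \ref{lem:ExistMaxTorus} applied to $(\SP,\E)$ the image is indeed a maximal torus for $\Delta$.

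The step I expect to be the main obstacle is the compatibility of the cube-by-cube un-shearing maps on $Z_\Delta$ — equivalently, that $\tilde\sigma$ restricts to a genuine isometry, not merely a homeomorphism, on $\Min_\F(A_\Delta)$, so that a sheared flat is carried to a flat rather than being folded or torn across hyperplanes interior to $Z_\Delta$. If this turns out to be delicate to verify directly, the alternative I would fall back on is to run the argument along the one-parameter family $\sigma_t(\F)$ for $t\in[0,1]$, using a connectedness-in-$t$ argument to conclude that the combinatorial type of $\Min_{\sigma_t(\F)}(A_\Delta)$, and in particular the carrying subcomplex $Z_\Delta$, is independent of $t$, and then identifying $\tilde\sigma$ restricted to this subcomplex with the corresponding composite of straightening maps.
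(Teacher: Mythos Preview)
Your plan is sound but considerably more elaborate than the paper's argument. The paper's proof is two sentences: because the identification of $\pi_1(\SP,\F)$ with $\AG$ is \emph{defined} through the composition $c_\pi\circ\sigma$, the lift $\tilde\sigma$ is $\AG$-equivariant by construction, and the paper concludes directly from this that $\tilde\sigma(\Min_\F(A_\Delta))=\Min_\E(A_\Delta)$. There is no combinatorial carrier $Z_\Delta$, no cell-by-cell affine analysis, and no appeal to the family $\sigma_t$.

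Everything you add is aimed at justifying the inference the paper treats as immediate, namely why an equivariant homeomorphism that is not an isometry should respect min sets. Your instinct that this deserves comment is reasonable, but the resolution is simpler than what you outline. The ``compatibility of cube-by-cube un-shearing maps'' is a non-issue: $\sigma$ is defined globally on $\SP$ in \cite{BCV}, so agreement on shared faces is built in and nothing needs to be checked. What actually closes the gap is the observation you already make in passing: the proof of Lemma~\ref{lem:ExistMaxTorus} uses only the flat torus theorem, the existence of the cube with label set $\Delta$, and Lemma~\ref{lem:BCV3.9}, all of which hold for any allowable metric. This pins down both min sets as the unique $|\Delta|$-dimensional flat through a fixed lift of that cube, and the equivariant cellular map $\tilde\sigma$ then preserves it. The affine-bijection argument and the $\sigma_t$ fallback are unnecessary. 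In short, your route is correct but scenic; the paper buys brevity by leaving to the reader precisely the step you spend most of your effort on.
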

\begin{proof} Our identification of the fundamental group of $(\SP,\F)$ with $\AG$ comes from the composition of the straightening map $\sigma\colon (\SP,\F)\rightarrow (\SP,\E)$ with the collapse map $c_\pi\colon (\SP,\E)\rightarrow \SaG$. Hence the straightening map $\sigma$ is equivariant with respect to the actions of $\AG$ on universal cover of $(\SP, \F)$ and $(\SP,\E)$.  It follows that $\sigma$ takes the min set of $A_\Delta$ in the universal cover of $(\SP, \F)$ to the corresponding min set in the universal cover of $(\SP,\E)$.  
\end{proof}

It follows from Lemma \ref{lem:MinPreserve} that all of the results obtained in Section \S\ref{sec:MaxTori} still hold for $(\SP,d)$.  For convenience, we record them in a corollary.

\begin{corollary}\label{cor:SameFacts}Let $(\SP,d)$ be a skewed metric blowup.  
\begin{enumerate}
\item For any maximal clique $\Delta$, there is a torus $\T_\Delta\subseteq \SP$ with a flat metric of dimension $|\Delta|$.  Every parallelotope of $(\SP,\F)$ is contained in some maximal torus.
\item Let $\Delta_1$ and $\Delta_2$ be two maximal cliques such that $\T_{\Delta_1}\cap \T_{\Delta_2}$ contains a vertex.  Then $\T_{\Delta_1}\cap \T_{\Delta_2}=\K\times \T_{\Delta}$ where $\K$ is convex and $\Delta=\Delta_1\cap\Delta_2$. 
\item Any two maximal tori $\T_1,\T_2$ can be connected by a chain of maximal tori $\T_1=\mbb{M}_0,~ \mbb{M}_2,~\ldots,~ \mbb{M}_{n+1}=\T_2$, such that for  $0\leq i\leq n$, $\mbb{M}_i\cap\mbb{M}_{i+1}$ contains a vertex.  
\end{enumerate}
\end{corollary}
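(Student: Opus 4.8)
The plan is to treat Corollary \ref{cor:SameFacts} as a transfer result: each of the three assertions proved in \S\ref{sec:MaxTori} for the rectilinear structure $(\SP,\E)$ is phrased either combinatorially (in terms of the fixed cube complex $\SP$ and the subcomplex $\EP$) or in terms of min sets of the abelian special subgroups $A_\Delta$. The bridge between $(\SP,\E)$ and a general skewed structure $(\SP,\F)$ is the straightening map $\sigma$. By Lemma \ref{lem:MinPreserve}, $\sigma$ lifts to an $\AG$-equivariant map of universal covers that carries $\Min(A_\Delta)$ in $(\SP,\F)$ onto $\Min(A_\Delta)$ in $(\SP,\E)$ for every maximal clique $\Delta$; since $\sigma$ is moreover the identity on the underlying cube complex, it is a combinatorial isomorphism, and hence carries maximal tori to maximal tori, matching $\Delta$ with $\Delta$ and respecting intersections. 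I would first record this single observation and then deduce the three facts from it.

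For (1), I would set $\T_\Delta=p(\Min(A_\Delta))$ in $(\SP,\F)$ and apply the flat torus theorem to the locally CAT(0) space $(\SP,\F)$ (which is locally CAT(0) by Definition \ref{def:Allowable}) to write $\Min(A_\Delta)=Y\times\R^{|\Delta|}$ with flat $\R^{|\Delta|}$ factor. By Lemma \ref{lem:MinPreserve}, $\sigma$ carries this onto $\Min(A_\Delta)=\R^{|\Delta|}$ in $(\SP,\E)$ (Lemma \ref{lem:ExistMaxTorus}); since $\sigma$ is a combinatorial isomorphism it preserves dimension, forcing $Y$ to be a point, so $\T_\Delta$ is a flat torus of dimension $|\Delta|$. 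The second sentence of (1) --- that every parallelotope lies in a maximal torus --- is purely combinatorial and transfers verbatim under $\sigma$ from the corresponding statement of Lemma \ref{lem:ExistMaxTorus}.

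For (2), the crucial input is Corollary \ref{cor:FullIntersection}, which was stated precisely so as to describe $\T_{\Delta_1}\cap\T_{\Delta_2}$ in a metric-independent way: it equals $p(\Min(A_{\Delta_1})\cap\Min(A_{\Delta_2}))$, a statement whose proof uses only CAT(0) geometry and the $\AG$-action and therefore applies verbatim to $(\SP,\F)$. Since min sets of abelian subgroups acting by semisimple isometries are convex in the CAT(0) universal cover, this intersection is convex, which yields the convexity of $\K$. For the product decomposition $\K\times\T_\Delta$ with $\Delta=\Delta_1\cap\Delta_2$, I would use the flat torus theorem splitting $\Min(A_\Delta)=Z\times\R^\Delta$: because the elements of $A_{\Delta_i}$ commute with $A_\Delta$, they preserve this orthogonal splitting, so $\Min(A_{\Delta_i})=W_i\times\R^\Delta$ for convex $W_i\subseteq Z$, and intersecting gives $(W_1\cap W_2)\times\R^\Delta$ with $\K=p(W_1\cap W_2)$ convex. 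Finally, (3) is the combinatorial chain argument of the last lemma of \S\ref{sec:MaxTori}: connect maximal cubes $C_1\subset\T_1$ and $C_2\subset\T_2$ by a minimal edge-path in $\EP$ and invoke part (1) to place each edge in a maximal torus. Since $\EP$ and its cubical structure are unchanged by $\sigma$, and part (1) is already established for $(\SP,\F)$, this proof carries over without modification.

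The main obstacle to watch is that $\sigma$ is \emph{not} an isometry --- it changes the metric while preserving edge lengths and the combinatorial structure --- so metric conclusions cannot simply be pushed across $\sigma$. The resolution, and the point I would emphasize, is that the only genuinely metric assertions (that $\T_\Delta$ is flat and that $\K$ is convex) follow from CAT(0) generalities, namely the flat torus theorem and convexity of min sets, which hold for \emph{every} allowable metric, while all remaining content is combinatorial and is transported by the combinatorial isomorphism $\sigma$. The one step I would check with care is the compatibility of the product decomposition in (2) with the flat-torus-theorem splitting of $\Min(A_\Delta)$ in the skewed metric; this is exactly where one uses that $A_{\Delta_i}$ commutes with $A_\Delta$ and hence respects the orthogonal splitting $Z\times\R^\Delta$.
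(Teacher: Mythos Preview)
Your proposal is correct and follows the same approach as the paper: the paper's proof is essentially the single sentence ``It follows from Lemma \ref{lem:MinPreserve} that all of the results obtained in Section \S\ref{sec:MaxTori} still hold for $(\SP,d)$,'' and you have unpacked exactly what that sentence means, separating the combinatorial content (transported by $\sigma$) from the metric content (re-derived from the flat torus theorem and convexity of min sets in any CAT(0) metric). One minor point worth aligning with the paper: the remark following the corollary observes that in the skewed metric the convex factor $\K$ in (2) need not lie in $\EP$, so your argument for (2) via min sets and the flat-torus splitting of $\Min(A_\Delta)$ is indeed the right one, whereas an attempt to literally transport the $\K\subseteq\EP$ conclusion of Lemma \ref{lem:2ToriMeet} would not work.
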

\begin{rmk} By Corollary \ref{cor:FullIntersection}, $\T_{\Delta_1}\cap \T_{\Delta_2}$ is the image of the convex set $\Min(A_{\Delta_1})\cap \Min(A_{\Delta_2})$, which decomposes as an orthogonal product $\K\times\T_{\Delta}$.  However, as $\EP$ need not be preserved under straightening, the $\K$ appearing here is not necessarily the same $\K$ as in Lemma \ref{lem:2ToriMeet}.
\end{rmk}
The next lemma is a direct consequence of the existence of maximal tori.  
\begin{lemma}\label{lem:RestrictTranslation} If $f$ is homotopic to the identity, then $f$ restricts to a translation on each maximal torus of $\SP$.  
\end{lemma}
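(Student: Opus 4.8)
Lemma \ref{lem:RestrictTranslation} (proof proposal)

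The plan is to exploit the uniqueness of min sets together with the flat torus theorem, applied one maximal clique at a time. Fix a maximal clique $\Delta$ and let $\T_\Delta\subseteq \SP$ be the corresponding maximal torus. First I would pass to the universal cover $p\colon \USP\to\SP$ and lift $f$ to an isometry $\widetilde f\colon \USP\to\USP$. Since $f$ is homotopic to the identity, it induces the identity on $\pi_1(\SP)=\AG$, so $\widetilde f$ commutes with the deck action of $\AG$; in particular, for every $g\in\AG$ we have $\widetilde f(\Min(g))=\Min(\widetilde f g \widetilde f^{-1})=\Min(g)$. Applying this to the generators of $A_\Delta$ (or directly to $A_\Delta$ itself, using that the min set of an abelian subgroup is the intersection of the min sets of its elements), we conclude $\widetilde f(\Min(A_\Delta))=\Min(A_\Delta)$, and hence $f(\T_\Delta)=\T_\Delta$.

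Next I would show the restriction $f|_{\T_\Delta}$ is a translation. By Corollary \ref{cor:SameFacts}(1), $\T_\Delta$ is a flat torus of dimension $|\Delta|$, so $\widetilde f$ restricts to an isometry of the flat $\Min(A_\Delta)\cong\R^{|\Delta|}$ that commutes with the translation action of $A_\Delta\cong\Z^{|\Delta|}$, which acts cocompactly. An isometry of euclidean space is an affine map $x\mapsto Ox+b$ with $O$ orthogonal; commuting with a cocompact lattice of translations forces $O$ to normalize that lattice, and since $\widetilde f$ induces the identity on $\pi_1=A_\Delta$ (it acts as the identity by conjugation on the deck group), in fact $O$ commutes with every element of the lattice, hence $O=\id$. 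Therefore $\widetilde f|_{\Min(A_\Delta)}$ is a translation, and descending to the quotient, $f|_{\T_\Delta}$ is a translation of the torus $\T_\Delta$.

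The only genuinely delicate point is making sure the hypothesis ``$f$ homotopic to the identity'' is used correctly: it guarantees not merely that $f$ preserves $\Min(A_\Delta)$ as a set, but that the lift $\widetilde f$ acts trivially under conjugation on the full deck group $\AG$, which is what kills the orthogonal part. I would state this as a preliminary observation (the lift $\widetilde f$ of an isometry homotopic to the identity can be chosen to commute with every deck transformation), and then the rest is the standard rigidity of isometries of $\R^n$ commuting with a cocompact lattice of translations. Everything else — nonemptiness of the min set, the torus structure, the product decompositions — has already been recorded in Corollary \ref{cor:SameFacts} and the flat torus theorem, so no further work is needed.
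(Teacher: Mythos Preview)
Your proposal is correct and follows essentially the same route as the paper's proof: lift to the universal cover, use equivariance to show $\widetilde f$ preserves $\Min(A_\Delta)$, and conclude that the restriction to each $\T_\Delta$ is a translation. The only difference is one of detail: the paper simply asserts the last step (an isometry of a flat torus homotopic to the identity is a translation), whereas you unpack it by writing the euclidean isometry as $x\mapsto Ox+b$ and arguing that commutation with the cocompact lattice forces $O=\id$.
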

\begin{proof}Lift $f$ to an isometry $\widetilde{f}$ from the universal cover $\USP$ to itself.  Since $f$ is homotopic to the identity, $\widetilde{f}$ is equivariantly homotopic to the identity. As $\widetilde{f}$ is an isometry, it therefore preserves the min set of every element, as well as the min set of every abelian special subgroup. It follows that $f$ maps every maximal torus $\T_\Delta$ to itself. Since it is homotopic to the identity, $f$ restricts to a translation on each $\T_\Delta$.
\end{proof}

Let $\tau$ be a translation on a torus $\T$.  Each such $\tau$  is contained in a unique closed subgroup which we denote $\mbb{G}_\tau$, and a unique closed connected subgroup which we denote $\T_\tau$.  Since $\mbb{G}_\tau$ is the closure of a cyclic subgroup, it follows from general theory of compact Lie groups that $\mbb{G}_\tau$ is isomorphic to $\Sa_\tau\times \mbb{F}$ where $\Sa_\tau$ is a closed connected subgroup of $\T$, and $\mbb{F}\cong \Z/n\Z$ is finite cyclic. The finite factor $\mbb{F}$ lies on a unique 1-parameter subgroup $\mbb{L}_\tau$. Hence $\T_\tau=\Sa_\tau\times\mbb{L}_\tau$ and $\pi_1(\T_\tau)=\pi_1(\Sa_\tau)\times \pi_1(\mbb{L}_\tau)$. In order to understand how an isometry as in Lemma \ref{lem:RestrictTranslation} restricts to intersections of maximal tori we will need the following lemma about which translations preserve connected subsets of a torus. 

\begin{lemma} \label{lem:ClosedOrbit}Let $\T$ be a torus, and $\mathbb{X}\subseteq \T$ a closed, connected subset.  If $\mbb{X}$ is preserved by some translation $\tau$, then the image of $\pi_1(\mbb{X})$ under inclusion contains that of  $\pi_1(\T_\tau)$.

\end{lemma}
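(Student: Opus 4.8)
The key point is that a translation $\tau$ by an element of a torus $\T$ generates, upon taking closure, the subgroup $\mbb{G}_\tau = \Sa_\tau \times \mbb{F}$, and the connected subgroup $\T_\tau = \Sa_\tau \times \mbb{L}_\tau$ is the smallest connected closed subgroup whose coset through any point contains the full orbit of $\tau$ through that point. So I would first reduce to an orbit statement: if $\mbb{X}$ is closed and $\tau$-invariant, then for any $x \in \mbb{X}$, the closure of the forward orbit $\overline{\{x + n\tau : n \geq 0\}}$ is a coset $x + \mbb{G}_\tau$, and this is contained in $\mbb{X}$ since $\mbb{X}$ is closed and $\tau$-invariant. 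Hence $\mbb{X}$ contains the coset $x + \mbb{G}_\tau$, and in particular the coset $x + \T_\tau$ of the connected subgroup $\T_\tau$ (note $\T_\tau \supseteq \mbb{G}_\tau$ only up to the finite factor sitting on $\mbb{L}_\tau$; more precisely $\T_\tau$ is generated by $\mbb{G}_\tau$ together with the $1$-parameter subgroup through $\mbb{F}$, so $x+\T_\tau$ is the union of the finitely many cosets $x + \mbb{G}_\tau$ translated along $\mbb{L}_\tau$ — but each such coset is again of the form $x' + \mbb{G}_\tau$ for some $x'$ in the orbit, so they all lie in $\mbb{X}$, and taking the closure of their union, which is connected, gives $x + \T_\tau \subseteq \mbb{X}$).

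**Key steps in order.** First, I would establish the structure statement $\overline{\langle \tau \rangle} = \mbb{G}_\tau = \Sa_\tau \times \mbb{F}$ and the identification of $\T_\tau = \Sa_\tau \times \mbb{L}_\tau$ as the connected closure, citing the compact Lie group theory already invoked in the paragraph preceding the lemma. Second, I would show that $x + \mbb{G}_\tau \subseteq \mbb{X}$ for every $x \in \mbb{X}$: the orbit $\{x + n\tau\}$ lies in $\mbb{X}$ by $\tau$-invariance and iteration, and its closure (a coset of $\overline{\langle\tau\rangle}$) lies in $\mbb{X}$ because $\mbb{X}$ is closed. Third, I would upgrade this from $\mbb{G}_\tau$ to $\T_\tau$ using connectedness of $\mbb{X}$: since $\mbb{X}$ contains $x + \mbb{G}_\tau$ for $x$ ranging over all of $\mbb{X}$, and $\T_\tau$ is obtained from $\mbb{G}_\tau$ by translating along $\mbb{L}_\tau$, I would argue that $x + \T_\tau$ is the closure of a union of cosets $x + \mbb{G}_\tau$ along a connected $1$-parameter direction, each contained in $\mbb{X}$, hence $x + \T_\tau \subseteq \overline{\mbb{X}} = \mbb{X}$. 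Finally, $x + \T_\tau \subseteq \mbb{X}$ gives, on fundamental groups, that $\pi_1(\mbb{X}) \to \pi_1(\T)$ has image containing $\pi_1(x + \T_\tau) = \pi_1(\T_\tau)$, which is the assertion.

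**Main obstacle.** The routine part is the orbit-closure computation; the subtle point — and where I would be most careful — is the passage from $\mbb{G}_\tau$ to the connected subgroup $\T_\tau$. The finite factor $\mbb{F} \cong \Z/n\Z$ is what forces us to work with $\T_\tau$ rather than just $\Sa_\tau$: the cosets $x + \mbb{G}_\tau$ need not themselves be connected, but $\mbb{X}$ \emph{is} connected, so one has to leverage connectedness of $\mbb{X}$ to ``fill in'' the $\mbb{L}_\tau$ direction joining the $n$ components of $\mbb{G}_\tau$. The cleanest way I see to do this is to observe that $\mbb{F}$ lies on the $1$-parameter subgroup $\mbb{L}_\tau$, so the closure of $\{x + t\ell : t \in \R, \ell \text{ the generator of } \mbb{L}_\tau\} \cup (x + \mbb{G}_\tau)$ is $x + \T_\tau$; but actually one wants to know these extra points lie in $\mbb{X}$, which does \emph{not} follow from $\tau$-invariance alone — it follows instead from connectedness: $\mbb{X} \cap (x + \Sa_\tau + \mbb{L}_\tau)$ is a closed subset of the $2$-torus (or lower) $x + \T_\tau$ that is $\mbb{G}_\tau$-invariant and contains a $\Sa_\tau$-coset, and a connected closed subset of a torus meeting a dense-winding translation orbit in that torus must be the whole torus. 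I expect the bulk of the write-up to be making this last density/connectedness argument precise, possibly by induction on $\dim \T$ or by directly analyzing $\mbb{X} \cap (x+\T_\tau)$ as an invariant subset whose projection to the $1$-dimensional quotient $\T_\tau / \Sa_\tau$ is a closed connected $\mbb{F}$-invariant, hence everything, subset.
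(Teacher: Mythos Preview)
Your plan has a genuine gap: the central claim you are working toward, namely that $x+\T_\tau\subseteq\mbb{X}$ for some (or every) $x\in\mbb{X}$, is simply false. Take $\T=\R^2/\Z^2$ and $\tau=(1/2,0)$, so that $\Sa_\tau=\{0\}$, $\mbb{F}=\{0,(1/2,0)\}\cong\Z/2\Z$, and $\T_\tau=S^1\times\{0\}$. Let $\mbb{X}$ be the image of $t\mapsto(t,2t)$. This is a closed connected curve, and it is $\tau$-invariant since $(t+1/2,2t)=(t+1/2,2(t+1/2))$ in $\T$. But $\mbb{X}$ meets any horizontal circle $\{y=c\}$ in only two points, so it certainly contains no coset of $\T_\tau$. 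All of your proposed fixes in the ``Main obstacle'' paragraph run into the same wall: they rely on $\mbb{X}\cap(x+\T_\tau)$ being connected, or on its projection to $\T_\tau/\Sa_\tau$ being connected, and neither holds in this example (the intersection is two points).

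The paper's argument avoids this entirely by never trying to embed $\T_\tau$ in $\mbb{X}$; it works directly at the level of homotopy classes. After translating so that $0\in\mbb{X}$ and observing $\mbb{G}_\tau=\Sa_\tau\times\mbb{F}\subseteq\mbb{X}$ (this part you have), the paper uses connectedness of $\mbb{X}$ exactly once: to choose a path $\alpha$ in $\mbb{X}$ from $0$ to a generator $g$ of $\mbb{F}$. Since $\mbb{X}$ is closed and $\tau$-invariant it is $\mbb{G}_\tau$-invariant, hence $g$-invariant, so the concatenation $\alpha*(\alpha+g)*\cdots*(\alpha+(n-1)g)$ is a loop lying in $\mbb{X}$, and this loop is declared to represent the $\mbb{L}_\tau$ generator. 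The moral is that connectedness buys you a \emph{path} to $g$, not an entire coset of $\T_\tau$; concatenating its $g$-translates is what produces the loop. Your step~2 is fine and matches the paper; it is step~3 that should be replaced by this path-concatenation argument rather than an attempt to fill in the whole $\mbb{L}_\tau$ direction inside $\mbb{X}$.
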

\begin{proof} Pick any $x_0\in \mbb{X}$ and consider the translate $\mbb{X}-x_0$. Since $\mbb{X}$ is preserved by $\tau$, so is $\mbb{X}-x_0$. Without loss of generality, we therefore assume $\mbb{X}$ contains $0$.  As $\mbb{X}$ is closed, $\mbb{X}$ contains $\mbb{G}_\tau=\Sa_\tau\times \mbb{F}$.  Let $g$ be a generator of $\mbb{F}\cong \Z/n\Z$.  Since $\mbb{X}$ is connected, we can find a path in $\mbb{X}$ from 0 to $g$.  Translating by powers of $g$, we can find a loop in $\mbb{X}$ homotopic to $\mbb{L}_\tau$.  Thus, the image of $\pi_1(\mbb{X})$ contains the image of $\pi_1(\Sa_\tau)\times \pi_1(\mbb{L}_\tau)=\pi_1(\T_\tau)$.
%
\end{proof}

We are now ready to prove Theorem \ref{thm:IsoID}.

\begin{proof}[Proof of Theorem \ref{thm:IsoID}]
Let $f:\SP\rightarrow \SP$ be an isometry that is homotopic to the identity. By Lemma \ref{lem:RestrictTranslation}, $f$ restricts to a translation on each maximal torus.  Suppose $\T_{\Delta_1}$ and $\T_{\Delta_2}$ are two maximal tori whose intersection contains a vertex.  Then $f|_{\T_{\Delta_1}}=\tau_1$ and $f|_{\T_{\Delta_2}}=\tau_2$ for some translations $\tau_1$ and $\tau_2$. By Corollary \ref{cor:SameFacts}(2), the image of $\pi_1(\T_{\Delta_1}\cap\T_{\Delta_2})$ in either $\pi_1(\T_{\Delta_1})$ or $\pi_1(\T_{\Delta_2})$ is just $\pi_1(\T_{\Delta_1\cap\Delta_2})$.  Hence by  Lemma \ref{lem:ClosedOrbit}, $\tau_1$ and $\tau_2$ each belong to $\T_{\Delta_1\cap\Delta_2}$.  Since the intersection of all maximal cliques in $\Gamma$ is $\Cent(\Gamma)$, by Corollary \ref{cor:SameFacts}(3), the fact that any two maximal tori can be connected by a chain implies that $f$ restricts to a translation by $\tau\in \T_{\Cent(\Gamma)}$ on each maximal torus. Now consider the isometry $g=\tau^{-1}\circ f$. Then $g$ restricts to the identity on every maximal torus.  By Corollary \ref{cor:SameFacts}(1), we conclude that $g=\id$ and hence $f=\tau$ is a translation in the central torus. 
%
%
%
\end{proof}

%
%
%

\bibliographystyle{plain}
\bibliography{IsomBib}

\end{document}